\newtheorem{thm}{Theorem}[section]
\newtheorem{lem}[thm]{Lemma}
\newtheorem{prop}[thm]{Proposition}
\newtheorem{defn}[thm]{Definition}
\newcommand{\Hom}{\mathrm{Hom}}
\newcommand{\QQ}{\bar{\mathbb Q}_\ell}
\newcommand{\R}{\mathrm R}
\newcommand{\AAA}{\mathbb A}
\newcommand{\PP}{\mathbb P}
\newcommand{\FF}{\mathcal F}
\newcommand{\GG}{\mathbb G}
\newcommand{\GGG}{\mathcal G}
\newcommand{\HH}{\mathrm H}
\newcommand{\HHH}{{\mathcal H}}
\newcommand{\LL}{{\mathcal L}}
\newcommand{\Dbc}{{\mathcal D}^b_c}
\newcommand{\Swan}{\mathrm{Swan}}
\newcommand{\FT}{\mathrm{FT}}
\newcommand{\RR}{\mathcal R}
\newcommand{\Gmk}{\GG_{m,\bar k}}
\begin{document}



\title{Katz-Radon transform of $\ell$-adic representations}
\author{Antonio Rojas-Le\'on}
\address{Departamanto de \'Algebra,
Universidad de Sevilla, Apdo 1160, 41080 Sevilla, Spain. \\ E-mail: arojas@us.es}
\thanks{Partially supported by P08-FQM-03894 (Junta de Andaluc\'{\i}a), MTM2010-19298 and FEDER}
\renewcommand{\thefootnote}{}
\footnote{Mathematics Subject Classification: 14F20,11F85,11S99}

\begin{abstract}
We prove a simple explicit formula for the local Katz-Radon transform of an $\ell$-adic representation of the Galois group of the fraction field of a strictly henselian discrete valuation ring with positive residual characteristic, which can be defined as the local additive convolution with a fixed tame character. The formula is similar to one proved by D. Arinkin in the $\mathcal D$-module setting, and answers a question posed by N. Katz.
\end{abstract}

\maketitle

\section{Introduction}

In \cite[3.4.1]{katz1996rls}, N. Katz defines some functors on the category of continuous $\ell$-adic representations of the inertia groups $I_0$ and $I_\infty$ of the projective line over $\bar k$ at $0$ and infinity, where $\bar k$ is the algebraic closure of a finite field of characteristic $p$ and $\ell$ is a prime different from $p$. These functors arise during his study of middle convolution of sheaves on the affine line and, roughly speaking, correspond to locally convolving a representation with a fixed tame character $\LL_\chi$ of $I_0$ or $I_\infty$. They are defined using G. Laumon's local Fourier transform functors, and in fact correspond to taking the tensor product with the conjugate tame character $\LL_{\bar\chi}$ on the other side of the equivalence of categories given by these functors. Katz asks \cite[3.4.1]{katz1996rls} whether there is a simple expression for the functors defined in this way.

Recently, D. Arinkin \cite{arinkin} has studied the analog of Katz's functor in $\mathcal D$-module theory: if $K$ is a field of characteristic $0$, $K((x))$ is the field of Laurent series over $K$ and $\mathcal D_x$ the ring of differential operators with coefficients in $K((x))$, the local Katz-Radon transform for a given $\lambda\in K-{\mathbb Z}$ is an equivalence of categories $\rho_\lambda:{\mathcal D}_x$-mod$\to{\mathcal D}_x$-mod, originally defined in \cite{d2002radon}. Arinkin proves the simple formula \cite[Theorem C]{arinkin}
$$
\rho_\lambda(\FF)\cong\FF\otimes{\mathcal K}^{\lambda(a+1)}
$$
for any $\FF\in{\mathcal D}_x$-mod with a single slope $a$, where ${\mathcal K}^{\mu}$ is the Kummer ${\mathcal D}_x$-module of rank $1$ generated by $\mathbf e$, on which the derivative acts by
$$
\frac{d}{dx}{\mathbf e}=\frac{\mu}{x}{\mathbf e}.
$$

In this article we will prove a similar formula in the $\ell$-adic case. More precisely, for a fixed tame $\ell$-adic character $\LL_\chi$ and an $\ell$-adic representation $\FF$ of $I_0$, let
$$
\rho_\chi(\FF):=\FT^{\psi,-1}_{(0,\infty)}(\LL_{\bar\chi}\otimes\FT^\psi_{(0,\infty)}\FF)
$$
where $\FT^\psi_{(0,\infty)}$ denotes Laumon's local Fourier transform functor. If $\FF$ has a single slope $a=c/d$ (with $c,d$ relatively prime positive integers), we will prove that there is an isomorphism of $I_0$-representations
$$
\rho_\chi(\FF)\cong\FF\otimes\LL_\chi^{\otimes(a+1)}
$$
where $\LL_\chi^{\otimes(a+1)}$ is any $d$-th root of the character $\LL_\chi^{\otimes(c+d)}$.

 For a large class of representations $\FF$ of $I_0$ (in particular for many of those who appear in applications), the isomorphism can be proven via the explicit formulas for the local Fourier transforms given by L. Fu \cite{fu2010calculation} and A. Abbes and T. Saito \cite{abbes2010local}. In this article we take a different approach that works for any $\FF$, and is independent of any explicit expression for the local Fourier transforms.

\section{The Katz-Radon transform}

Fix a finite field $k$ of characteristic $p>0$ and an algebraic closure $\bar k$. Let $\PP^1_{\bar k}$ be the projective line over $\bar k$ and, for every $t\in\PP^1({\bar k})=\bar k\cup\{\infty\}$, denote by $I_t$ its inertia group at $t$: for $t\neq\infty$, if $x-t$ denotes a local coordinate at $t$, it is the Galois group of the fraction field of the henselization of the local ring $\bar k[x]_{(x-t)}$. We have an exact sequence \cite[1.0]{katz1988gauss}
$$
0\to P_t\to I_t\to \prod_{\ell\neq p}{\mathbb Z}_\ell(1)\to 0
$$
for every $t\in\PP^1({\bar k})$, where $P_t$ is the only $p$-Sylow subgroup of $I_t$. Moreover, there is a canonical filtration of $I_t$ by the higher ramification groups
$$
I_t^{(r)}\supseteq I_t^{(s)} \text{ for }0\leq r<s\in{\mathbb R}
$$
which are normal in $I_t$.

Fix a prime $\ell\neq p$, and denote by $\RR_t$ the abelian category of continuous $\ell$-adic representations of $I_t$ (i.e. continuous representations $\FF:I_t\to\mathrm{GL}_n(\QQ)$, whose image is in $\mathrm{GL}_n(E_\lambda)$ for some finite extension $E_\lambda$ of ${\mathbb Q}_\ell$). For every irreducible $\FF\in\RR_t$, the \emph{slope} of $\FF$ is $\inf\{r\geq 0|\FF_{|I_t^{(r)}}\text{ is trivial}\}$. It is a non-negative rational number. In general, the slopes of $\FF$ are the slopes of the irreducible components of $\FF$. For every $\FF$ there is a canonical direct sum decomposition \cite[Lemma 1.8]{katz1988gauss}
\begin{equation}\label{directsum}
\FF\cong\bigoplus_{r\geq 0}\FF^r
\end{equation}
with $\FF^r$ having a single slope $r$. The slope $0$ (tame) part will be denoted by $\FF^t$. $\FF$ is said to be \emph{tame} (respectively \emph{totally wild}) if $\FF=\FF^t$ (resp. $\FF^t=0$).

For every $r\geq 0$ let $\RR_t^r$ denote the full subcategory of $\RR_t$ consisting of representations with a single slope $r$. We have a decomposition
$$
\RR_t=\bigoplus_{r\geq 0}\RR_t^r
$$
in the sense that every $\FF\in\RR_t$ has a decomposition \eqref{directsum} and $\Hom_{\RR_t}(\FF,\GGG)=0$ if $\FF\in\RR_t^r$, $\GGG\in\RR_t^s$ and $r\neq s$ \cite[Proposition 1.1]{katz1988gauss}.

Let $k'\subseteq \bar k$ be a finite extension of $k$, and $\chi:k'^\ast\to\QQ^\ast$ a multiplicative character. By \cite[1.4-1.8]{deligne569application} there is an associated smooth Kummer sheaf $\LL_\chi$ on $\Gmk$, which is a tame character of $I_0$ (and of $I_\infty$) of the same order as $\chi$. If $k'\subseteq k''$ is another extension, the sheaves defined by $\chi$ and $\chi\circ\mathrm{Nm}_{k''/k'}:k''^\ast\to\QQ^\ast$ are isomorphic. Moreover, every tame character of $I_0$ (and of $I_\infty$) can be obtained in this way. Whenever we speak about a tame character of $I_0$, we will implicitly assume that we have made a choice of such a finite extension of $k$ and of a character.

Fix a non-trivial additive character $\psi:k\to\QQ^\ast$. The local Fourier transform functors, defined by G. Laumon in \cite{laumon1987transformation}, give equivalences of categories
$$
\FT_{(0,\infty)}^\psi:\RR_0\to\RR_\infty^{<1},
$$
$$
\FT_{(\infty,\infty)}^\psi:\RR_\infty^{>1}\to\RR_\infty^{>1}
$$
and
$$
\FT_{(\infty,0)}^\psi:\RR_\infty^{<1}\to\RR_0
$$
(where $\RR_\infty^{<1}=\bigoplus_{r<1}\RR_\infty^r$ and $\RR_\infty^{>1}=\bigoplus_{r>1}\RR_\infty^r$) that describe the relationship between the local monodromies of an $\ell$-adic sheaf on $\AAA^1_{\bar k}$ and its Fourier transform with respect to $\psi$. The Katz-Radon transform is defined in terms of them.

\begin{defn}
Fix a tame character $\LL_\chi$ of $I_0$. The (local) \emph{Katz-Radon transform} (with respect to $\LL_\chi$) is the functor $\rho_\chi:\RR_0\to\RR_0$ given by
$$
\rho_\chi(\FF)=\FT_{(0,\infty)}^{\psi,-1}(\FT_{(0,\infty)}^\psi\LL_\chi\otimes\FT_{(0,\infty)}^\psi\FF)=\FT_{(0,\infty)}^{\psi,-1}(\LL_{\bar\chi}\otimes\FT_{(0,\infty)}^\psi\FF).
$$
\end{defn}

The Katz-Radon transform is an auto-equivalence of the category $\RR_0$ (since it is a composition of three equivalences of categories). It preserves dimensions and slopes, and for tame $\FF$ it is given by $\rho_\chi(\FF)=\FF\otimes\LL_\chi$ \cite[3.4.1]{katz1996rls}. For totally wild $\FF$, it can be interpreted as the ``local additive convolution'' of $\FF$ and $\LL_\chi$ \cite[3.4.3]{katz1996rls}: if we extend $\FF$ to a smooth sheaf on $\Gmk$, tamely ramified at infinity, then $\rho_\chi(\FF)$ is the wild part of the local monodromy at $0$ of $\FF\ast\LL_\chi$, where
$$
\FF\ast\LL_\chi=\R^1\sigma_!(\FF\boxtimes\LL_\chi)
$$
and $\sigma:\AAA^2_{\bar k}\to\AAA^1_{\bar k}$ denotes the addition map (in \cite{katz1996rls}, the ``middle convolution'' is used instead, but that one differs from the one used here only by Artin-Shreier components, which are smooth at $0$ and therefore do not affect the local monodromy). Notice that, in particular, $\rho_\chi$ is independent of the choice of the additive character $\psi$.

More intrinsically, it can be described in terms of vanishing cycles functors \cite[2.7.2]{laumon1987transformation}: If $X={\mathbb A}^2_{(0,0)}$ (respectively $S=\AAA^1_{(0)}$) denotes the henselization of $\AAA^2_{\bar k}$ at $(0,0)$ (resp. the henselization of $\AAA^1_{\bar k}$ at $0$) then $\rho_\chi(\FF)\cong\R^1\Phi(\sigma,\FF\boxtimes\LL_\chi)_{(0,0)}$, where $\R\Phi(\sigma,\FF\boxtimes\LL_\chi)$ is the vanishing cycles complex for the addition map $\sigma:X\to S$ with respect to the sheaf $\FF\boxtimes\LL_\chi$ on $X$. 

Similarly, it also has an interpretation as a ``local multiplicative convolution'' \cite[Corollary 5.6]{rl2011}: If $X={\GG}^2_{m,(1,1)}$ (respectively $S={\GG}_{m,(1)}$) denotes the henselization of $\Gmk$ at $(1,1)$ (resp. the henselization of $\Gmk$ at $1$) then $\rho_\chi(\FF)\cong\R^1\Phi(\mu,\FF\boxtimes\LL_\chi)_{(1,1)}$, where $\R\Phi(\mu,\FF\boxtimes\LL_\chi)$ is the vanishing cycles complex for the multiplication map $\mu:X\to S$ with respect to the sheaf $\FF\boxtimes\LL_\chi$ on $X$, and $\FF$ and $\LL_\chi$ are viewed as representations of $I_1$ via the isomorphism $I_0\cong I_1$ that maps the uniformizer $x$ at $0$ to the uniformizer $x-1$ at $1$.

The main result of this article is the following simple expression for $\rho_\chi$:
\begin{thm}\label{main}
Let $\FF\in\RR_0$ be totally wild with a single slope $a>0$. Write $a=c/d$, where $c$ and $d$ are relatively prime positive integers. Let $\LL_\eta$ be any tame character of $I_0$ such that $\LL_\eta^{\otimes d}=\LL_{\chi}^{\otimes(c+d)}$. Then
$$
\rho_\chi(\FF)\cong\FF\otimes\LL_\eta.
$$ 
\end{thm}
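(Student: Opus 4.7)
The plan is to reduce to irreducible $\FF$ and exploit the structure of irreducible totally wild $I_0$-representations with rational slope. Both $\rho_\chi$ and the functor $\otimes\LL_\eta$ are additive and respect the slope decomposition \eqref{directsum}, so we may assume $\FF$ is irreducible. By the Katz classification, any such irreducible $\FF$ of slope $c/d$ with $\gcd(c,d)=1$ arises as $\FF\cong[d]_*\GGG$, where $[d]\colon\GG_m\to\GG_m$ is the $d$-th power Kummer map and $\GGG$ is an $I_0$-representation on the cover which, after iterating if necessary, may be taken of rank one and of integer slope $c$.

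The central technical input I would establish is a commutation between Laumon's local Fourier transform and Kummer pushforward, of the schematic form
$$
\FT^\psi_{(0,\infty)}\bigl([d]_*\GGG\bigr)\cong[d']_*\bigl(\FT^{\psi'}_{(0,\infty)}\GGG\bigr),
$$
for a suitable target Kummer map $[d']$ and companion additive character $\psi'$. The slopes are consistent: $\FT^{\psi'}_{(0,\infty)}\GGG$ has slope $c/(c+1)$ at $\infty$, which the pushforward by $[d']$ converts to $c/(c+d)=a/(a+1)$, matching the slope of $\FT^\psi_{(0,\infty)}\FF$. Combined with the projection formula $[d']_*(\HHH)\otimes\LL_{\bar\chi}\cong[d']_*\bigl(\HHH\otimes[d']^*\LL_{\bar\chi}\bigr)$ and the identity $[d']^*\LL_{\bar\chi}\cong\LL_{\bar\chi}^{\otimes d'}$, this lets me rewrite the tensoring with $\LL_{\bar\chi}$ in the definition of $\rho_\chi$ as a tensoring upstairs by a tame character $\LL_{\chi'}$, giving
$$
\rho_\chi([d]_*\GGG)\cong[d]_*\rho_{\chi'}(\GGG).
$$

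On the cover, $\GGG$ has integer slope $c$, and up to a tame twist it is an Artin--Schreier sheaf for a Laurent series with pole of order $c$. For such rank-one objects $\rho_{\chi'}$ can be evaluated by a direct stationary-phase computation, yielding $\rho_{\chi'}(\GGG)\cong\GGG\otimes\LL_{\eta'}$ for an explicit tame character $\LL_{\eta'}$ depending on $\chi'$ and $c$. Pushing this rank-one identity down by $[d]_*$ via the projection formula, and tracing through the correspondences $\chi'\leftrightarrow\chi$ and $\eta'\leftrightarrow\eta$, should produce precisely the stated relation $\LL_\eta^{\otimes d}\cong\LL_\chi^{\otimes(c+d)}$.

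The main obstacle is the Kummer commutation in the second paragraph. Without invoking the explicit formulas of Fu or Abbes--Saito, it has to be deduced categorically from the vanishing cycles interpretation of $\FT^\psi_{(0,\infty)}$ recalled in the text and from its compatibility with the finite base change $[d]$. Identifying the correct target degree $d'$ and upstairs twist $\chi'$, and seeing how the combination $a+1=(c+d)/d$ emerges in the exponent of $\LL_\chi$, is where the bookkeeping becomes delicate and constitutes the heart of the argument.
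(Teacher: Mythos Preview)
Your plan has two concrete gaps, and even with those repaired it becomes the explicit local Fourier transform approach that the paper deliberately bypasses.

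The reduction to irreducible is not valid as stated: additivity of $\rho_\chi$ and of $(-)\otimes\LL_\eta$ only reduces to \emph{indecomposable} $\FF$, and two exact functors agreeing on simples need not agree on a nontrivial self-extension of a simple. The paper treats this step separately, writing an indecomposable $\FF\in\RR_0^a$ as $\FF_0\otimes{\mathcal U}_m$ with $\FF_0$ irreducible and ${\mathcal U}_m$ unipotent, and using that $\rho_\chi$, being an equivalence, preserves indecomposability. Your rank-one claim is also too strong: by \cite[1.14]{katz1988gauss} an irreducible $\FF$ of dimension $n=n_0p^\alpha$ (with $p\nmid n_0$) is induced from a $p^\alpha$-dimensional representation of $I_0(n_0)$, and no further prime-to-$p$ Kummer pushforward reduces the rank when $\alpha>0$. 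So the base case of your scheme is a $p$-power-dimensional $\GGG$ of integer slope, for which a direct ``stationary-phase'' computation of $\rho_{\chi'}$ is no longer elementary.

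The deeper issue is the Kummer--Fourier commutation itself. A clean identity of the shape $\FT^\psi_{(0,\infty)}\circ[d]_*\cong[d']_*\circ\FT^{\psi'}_{(0,\infty)}$ does not drop out of general vanishing-cycles base change; making it precise (the correct $d'$, the modified additive character, and the inevitable tame correction factors) is essentially what the explicit formulas of Fu and Abbes--Saito encode. The paper takes an entirely different route that never computes a local Fourier transform: it proves a Swan-conductor identity $\Swan(\rho_\chi(\FF)\otimes\GGG)=\Swan(\FF\otimes\GGG)$ for all $\GGG$ via an Euler-characteristic manipulation on $\AAA^1_{\bar k}$, deduces that $\rho_\chi(\FF)\otimes\widehat\FF$ has nonzero tame part and hence $\rho_\chi(\FF)\cong\FF\otimes\LL_\eta$ for \emph{some} $\eta$, and then pins down $\LL_\eta^{\otimes n}\cong\LL_\chi^{\otimes n(a+1)}$ by comparing the determinants of the global sheaf $\FF\ast\LL_\chi$ at $0$ and at $\infty$. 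A short induction/restriction lemma finally shows that any $\eta$ with $\LL_\eta^{\otimes d}=\LL_\chi^{\otimes(c+d)}$ yields the same twist of $\FF$. This argument is uniform in $\FF$ and never touches the internal structure of the wild part, which is exactly what your approach would have to unpack.
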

In other words, we have the formula 
\begin{equation}
 \rho_\chi(\FF)\cong\FF\otimes\LL_{\chi}^{\otimes(a+1)}
\end{equation}
where $\LL_{\chi}^{\otimes(a+1)}$ stands for ``any character that can reasonably be called $\LL_{\chi}^{\otimes(a+1)}$''.

By the decomposition $\RR_0=\bigoplus_{r\geq 0}\RR_0^r$, this determines $\rho_\chi(\FF)$ for any $\FF\in\RR_0$, thus answering the question posed by N. Katz in \cite[3.4.1]{katz1996rls}.

A question that remains open is the following: in the article we prove that $\rho_\chi(\FF)\cong\FF\otimes\LL_{\eta}$, independently for any $\FF$ with slope $a$. So the functors $\RR_0^a\to\RR_0^a$ given by $\rho_\chi$ and $(-)\otimes\LL_\eta$ map any $\FF$ to isomorphic objects. Is there an actual isomorphism of functors between them? In the affirmative case, is there a simple way to construct it?

\section{Proof of the main theorem}

In this section we will prove theorem \ref{main}. We will start with the case where $\FF\in\RR_0$ is irreducible.

\begin{lem}\label{chartame}
 Let $\FF\in\RR_0$. Then $\FF^t\neq 0$ if and only if there exists $\epsilon>0$ such that for every $\GGG\in\RR_0$ with a single slope $b\in(0,\epsilon)$ we have 
$$
\Swan(\FF\otimes\GGG)>\Swan(\FF)\dim(\GGG).
$$
\end{lem}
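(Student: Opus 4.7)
The plan is to prove the lemma by a direct computation of Swan conductors, using the break decomposition $\FF\cong\bigoplus_{r\geq 0}\FF^r$ of (\ref{directsum}) together with the standard behavior of slopes under tensor products. The key input I will invoke is the classical fact that if $\FF'$ has a single slope $a$ and $\GGG'$ has a single slope $b$ with $a\neq b$, then $\FF'\otimes\GGG'$ has a single slope $\max(a,b)$ and dimension $\dim(\FF')\dim(\GGG')$. In particular, tensoring the tame part $\FF^t$ with $\GGG$ of positive slope $b$ produces a pure slope-$b$ representation, and tensoring any pure slope-$r$ summand of $\FF$ with $r>b$ by $\GGG$ preserves the slope $r$.

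Concretely, I would pick $\epsilon>0$ strictly smaller than every positive slope appearing in $\FF$; this is possible because by (\ref{directsum}) the set of nonzero breaks of $\FF$ is finite (if $\FF$ has no positive slope, any $\epsilon>0$ works). For any $\GGG$ with single slope $b\in(0,\epsilon)$, the decomposition
$$
\FF\otimes\GGG=(\FF^t\otimes\GGG)\oplus\bigoplus_{r>0}(\FF^r\otimes\GGG)
$$
is precisely the break decomposition of $\FF\otimes\GGG$: the first summand is purely of slope $b$, and each $\FF^r\otimes\GGG$ with $r>0$ is purely of slope $r>b$. Additivity of the Swan conductor together with the pure-slope rank formula then gives
$$
\Swan(\FF\otimes\GGG)=b\,\dim(\FF^t)\dim(\GGG)+\sum_{r>0}r\,\dim(\FF^r)\dim(\GGG)=b\,\dim(\FF^t)\dim(\GGG)+\Swan(\FF)\dim(\GGG),
$$
so that $\Swan(\FF\otimes\GGG)-\Swan(\FF)\dim(\GGG)=b\,\dim(\FF^t)\dim(\GGG)$. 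This quantity is strictly positive exactly when $\FF^t\neq 0$, which yields both implications at once: the ``only if'' direction is immediate with the $\epsilon$ above, while for the ``if'' direction one notes that when $\FF^t=0$ the difference vanishes (equality, not strict inequality) for all $\GGG$ with sufficiently small slope, and such $\GGG$ certainly exist (e.g.\ Kummer pullbacks of Artin--Schreier sheaves realize every positive rational slope).

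No real obstacle is anticipated. The only non-routine input is the slope-behavior under tensor product, which is classical and follows from Katz's treatment of break decompositions in \cite{katz1988gauss}; all the rest is straightforward bookkeeping with dimensions and Swan conductors.
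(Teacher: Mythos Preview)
Your proposal is correct and follows essentially the same approach as the paper: both arguments use the break decomposition of $\FF$, the max-slope rule for tensor products from \cite[Lemma~1.3]{katz1988gauss}, and a direct Swan-conductor computation, arriving at the same identity $\Swan(\FF\otimes\GGG)-\Swan(\FF)\dim(\GGG)=b\,\dim(\FF^t)\dim(\GGG)$ for $b$ below the smallest positive break of $\FF$. The only cosmetic difference is that you package both implications into the single formula, whereas the paper treats them separately; your remark on the existence of $\GGG$ with arbitrarily small positive slope also matches the paper's.
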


\begin{proof}
 Suppose that $\FF^t\neq 0$, and let $a_0=0<a_1<\cdots<a_r$ be the slopes of $\FF$, with multiplicities $n_0,n_1,\ldots,n_r$. Then $\Swan(\FF)=\sum n_ia_i$. Let $\epsilon=a_1$. Then for every $\GGG\in\RR_0$ with a single slope $b\in(0,\epsilon)$ the tensor product $\FF\otimes\GGG$ has slopes $b<a_1<\cdots<a_r$ with multiplicities $n_0m,n_1m,\ldots,n_rm$ where $m=\dim(\GGG)$ by \cite[Lemma 1.3]{katz1988gauss}. Therefore
$$
\Swan(\FF\otimes\GGG)=n_0mb+\sum_{i=1}^rn_ima_i>\sum_{i=1}^rn_ima_i=\Swan(\FF)\dim(\GGG).
$$
Conversely, suppose that $\FF^t=0$, and let $a_1<\cdots<a_r$ be the slopes of $\FF$. Then for every $\GGG\in\RR_0$ with a single slope $b\in(0,a_1)$ the tensor product $\FF\otimes\GGG$ has the same slopes as $\FF$ by \cite[Lemma 1.3]{katz1988gauss}, and in particular $\Swan(\FF\otimes\GGG)=\Swan(\FF)\dim(\GGG)$. This proves the lemma, since for every $\epsilon>0$ there exist representations in $\RR_0$ with slope $b\in(0,\epsilon)$ (for instance, one may take $[n]_\ast{\mathcal H}$, where ${\mathcal H}\in\RR_0$ has slope $a>0$ and $n$ is a prime to $p$ integer greater than $a/\epsilon$ \cite[1.13.2]{katz1988gauss}).
\end{proof}

For any two objects $K,L\in\Dbc(\AAA^1_{\bar k},\QQ)$, we will denote by $K\ast L\in\Dbc(\AAA^1_{\bar k},\QQ)$ their additive convolution:
$$
K\ast L=\R\sigma_!(K\boxtimes L)
$$
where $\sigma:\AAA^2_{\bar k}\to\AAA^1_{\bar k}$ is the addition map.

\begin{lem}\label{associative}
 Let $K,L,M\in\Dbc(\AAA^1_{\bar k},\QQ)$. Then 
$$
\R\Gamma_c(\AAA^1_{\bar k},(K\ast L)\otimes M)\cong\R\Gamma_c(\AAA^1_{\bar k},K\otimes((\tau_{-1}^\ast L)\ast M))
$$
where $\tau_{-1}:\AAA^1_{\bar k}\to\AAA^1_{\bar k}$ is the additive inversion.
\end{lem}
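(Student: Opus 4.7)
The plan is to reduce both sides to cohomology on $\AAA^2_{\bar k}$ by the projection formula, and then relate them by an explicit linear change of variables.

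First, I would rewrite the left-hand side. By the projection formula applied to $\sigma:\AAA^2_{\bar k}\to\AAA^1_{\bar k}$,
$$
(K\ast L)\otimes M=\R\sigma_!(K\boxtimes L)\otimes M\cong\R\sigma_!\bigl((K\boxtimes L)\otimes\sigma^\ast M\bigr),
$$
and applying $\R\Gamma_c(\AAA^1_{\bar k},-)$ together with $\R\Gamma_c(\AAA^1_{\bar k},-)\circ\R\sigma_!=\R\Gamma_c(\AAA^2_{\bar k},-)$ gives
$$
\R\Gamma_c\bigl(\AAA^1_{\bar k},(K\ast L)\otimes M\bigr)\cong\R\Gamma_c\bigl(\AAA^2_{\bar k},(K\boxtimes L)\otimes\sigma^\ast M\bigr).
$$
An identical calculation applied to the right-hand side yields
$$
\R\Gamma_c\bigl(\AAA^1_{\bar k},K\otimes((\tau_{-1}^\ast L)\ast M)\bigr)\cong\R\Gamma_c\bigl(\AAA^2_{\bar k},\sigma^\ast K\otimes((\tau_{-1}^\ast L)\boxtimes M)\bigr).
$$

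Next, I would exhibit a linear automorphism of $\AAA^2_{\bar k}$ pulling one integrand back to the other. Taking coordinates $(x,y)$ on the source and $(y',z)$ on the target, let $\phi:\AAA^2_{\bar k}\to\AAA^2_{\bar k}$ be the automorphism $\phi(x,y)=(-y,x+y)$. Then on stalks at $(x,y)$ the sheaf $\sigma^\ast K\otimes((\tau_{-1}^\ast L)\boxtimes M)$, which at $(y',z)$ has stalk $K_{y'+z}\otimes L_{-y'}\otimes M_{z}$, pulls back to $K_x\otimes L_y\otimes M_{x+y}$, which is precisely the stalk of $(K\boxtimes L)\otimes\sigma^\ast M$. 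Hence
$$
\phi^\ast\bigl(\sigma^\ast K\otimes((\tau_{-1}^\ast L)\boxtimes M)\bigr)\cong(K\boxtimes L)\otimes\sigma^\ast M,
$$
and since $\phi$ is an isomorphism of schemes it induces an isomorphism on $\R\Gamma_c(\AAA^2_{\bar k},-)$, giving the lemma.

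I do not anticipate a real obstacle here: the only content is picking the right change of variables. The sole subtlety worth checking is that each object lives in $\Dbc$ (so the projection formula applies in its derived form without boundedness issues) and that the stalk description of $\phi^\ast$ actually determines the pullback sheaf on $\AAA^2_{\bar k}$; both follow from standard functoriality, since $\phi$, $\sigma$, $\tau_{-1}$ and $\boxtimes$ are all compatible with pullback via the obvious commutative square $\sigma\circ\phi=\mathrm{pr}_2$ (the projection to the second factor composed with $\tau_{-1}$ on the first), which is what encodes the $\tau_{-1}$ that appears on the right-hand side.
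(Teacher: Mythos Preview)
Your proof is correct and follows essentially the same approach as the paper: reduce both sides to $\R\Gamma_c$ on $\AAA^2_{\bar k}$ via the projection formula, then match them by a linear automorphism of $\AAA^2_{\bar k}$. The paper uses the involution $(x,y)\mapsto(x+y,-y)$ and verifies the identification via the functorial identities $\pi_1\circ\phi=\sigma$, $\pi_2\circ\phi=\tau_{-1}\circ\pi_2$, $\sigma\circ\phi=\pi_1$ rather than a stalk check, but this is purely cosmetic; note, however, that your closing remark ``$\sigma\circ\phi=\mathrm{pr}_2$'' is a slip (with your $\phi$ one has $\sigma\circ\phi=\mathrm{pr}_1$), though this does not affect the argument.
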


\begin{proof}
 We have
\begin{align*}
&\R\Gamma_c(\AAA^1_{\bar k},(K\ast L)\otimes M)=\R\Gamma_c(\AAA^1_{\bar k},\R\sigma_!(K\boxtimes L)\otimes M)=
\\
&=\R\Gamma_c(\AAA^1_{\bar k},\R\sigma_!((K\boxtimes L)\otimes\sigma^\ast M))
=\R\Gamma_c(\AAA^2_{\bar k},(K\boxtimes L)\otimes\sigma^\ast M)
\end{align*}
by the projection formula. If $\pi_1,\pi_2:\AAA^2_{\bar k}\to\AAA^1_{\bar k}$ are the projections then 
$$
\R\Gamma_c(\AAA^2_{\bar k},(K\boxtimes L)\otimes\sigma^\ast M)
=\R\Gamma_c(\AAA^2_{\bar k},\pi_1^\ast K\otimes \pi_2^\ast L\otimes\sigma^\ast M).
$$
Consider the automorphism $\phi:\AAA^2_{\bar k}\to\AAA^2_{\bar k}$ given by $(x,y)\mapsto (x+y,-y)$. Then $\sigma=\pi_1\circ\phi$, $\pi_1=\sigma\circ\phi$ and $\tau_{-1}\circ\pi_2=\pi_2\circ\phi$. It follows that
\begin{align*}
&\R\Gamma_c(\AAA^2_{\bar k},\pi_1^\ast K\otimes \pi_2^\ast L\otimes\sigma^\ast M)\cong\R\Gamma_c(\AAA^2_{\bar k},\phi^\ast\pi_1^\ast K\otimes \phi^\ast\pi_2^\ast L\otimes\phi^\ast\sigma^\ast M)=
\\
&=\R\Gamma_c(\AAA^2_{\bar k},\sigma^\ast K\otimes \pi_2^\ast \tau_{-1}^\ast L\otimes\pi_1^\ast M)=\R\Gamma_c(\AAA^1_{\bar k},\R\sigma_!(\sigma^\ast K\otimes \pi_2^\ast \tau_{-1}^\ast L\otimes\pi_1^\ast M))\cong
\\
&\cong\R\Gamma_c(\AAA^1_{\bar k},K\otimes\R\sigma_!((\tau_{-1}^\ast L)\boxtimes M))=\R\Gamma_c(\AAA^1_{\bar k},K\otimes((\tau_{-1}^\ast L)\ast M)).
\end{align*}
\end{proof}

If $\FF$ is a smooth $\QQ$-sheaf on $\Gmk$ which is totally wild at $0$, then for every $t\in\bar k$ the sheaf $\FF\otimes\LL_{\chi(t-x)}$ (extended by zero to $\AAA^1_{\bar k}$) is totally wild at $0$ and has no punctual sections (where $\LL_{\chi(t-x)}$ is the pull-back of the Kummer sheaf $\LL_\chi$ under the map $x\mapsto t-x$), so its only non-zero cohomology group with compact support is $\HH^1_c$. We conclude that the only non-zero cohomology sheaf of $\FF[0]\ast\LL_\chi[0]\in\Dbc(\AAA^1_{\bar k},\QQ)$ is $\HHH^1=\R^1\sigma_!(\FF\otimes\LL_\chi)$. We will denote this sheaf by $\FF\ast\LL_\chi$.

\begin{lem}\label{sameswan}
 Let $\FF,\GGG\in\RR_0$. Then 
$$
\Swan(\rho_\chi(\FF)\otimes\GGG)=\Swan(\FF\otimes\GGG).
$$
\end{lem}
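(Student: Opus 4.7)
The strategy is to globalize: realize $\FF$ and $\GGG$ as smooth $\ell$-adic sheaves on $\Gmk$ that are tame at infinity, and then extract the identity from Lemma \ref{associative} via the Euler--Poincar\'e formula.

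By the slope decomposition $\RR_0=\bigoplus_{r\geq 0}\RR_0^r$ and the bi-additivity of $(\FF,\GGG)\mapsto\Swan(\FF\otimes\GGG)$ on the Grothendieck group, I may assume $\FF$ and $\GGG$ have single slopes. The case where either is tame is immediate, since on tame representations $\rho_\chi$ is tensoring with $\LL_\chi$ and tame twists preserve the Swan conductor, so I restrict to $\FF,\GGG$ both totally wild. I then extend them to smooth $\QQ$-sheaves $\widetilde\FF,\widetilde\GGG$ on $\Gmk$, tame at $\infty$, and set $K=j_!\widetilde\FF$, $L=j_!\LL_\chi$, $M=j_!\widetilde\GGG$ on $\AAA^1_{\bar k}$. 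K\"unneth together with $\chi_c(\AAA^1_{\bar k},\LL_\chi)=0$, an Euler--Poincar\'e stalk computation, and the interpretation of $\rho_\chi(\FF)$ as the wild part of the local monodromy at $0$ of $\widetilde\FF\ast\LL_\chi$ show that $K\ast L$ is smooth on $\Gmk$ of generic rank $\dim(\FF)+\Swan(\FF)$, is tame at $\infty$, and has $I_0$-local monodromy at $0$ splitting as $\rho_\chi(\FF)\oplus T_{\FF}$, with $T_{\FF}$ tame of dimension $\Swan(\FF)$; the analogous statement holds for $L\ast M$.

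Now I apply Lemma \ref{associative}. Since $\tau_{-1}^\ast\LL_\chi\cong\LL_\chi$ geometrically on $\Gmk$, it yields
$$
\chi_c(\AAA^1_{\bar k},(K\ast L)\otimes M)=\chi_c(\AAA^1_{\bar k},K\otimes(L\ast M)).
$$
Both sheaves have vanishing stalk at $0$ (because $K_0=M_0=0$) and are tame at $\infty$, so Euler--Poincar\'e on $\Gmk$ reduces the display to an equality of Swan conductors at $0$. Expanding each side via the wild--tame splitting at $0$ and using $\Swan(T\otimes\HHH)=\dim(T)\cdot\Swan(\HHH)$ for tame $T$, the common cross term $\Swan(\FF)\Swan(\GGG)$ cancels, producing the claimed equality.

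I expect the main obstacle to be twofold. First, the structural assertions on $\widetilde\FF\ast\LL_\chi$ near $0$ and $\infty$---especially the dimension $\Swan(\FF)$ of the tame summand $T_{\FF}$---require careful Euler-characteristic bookkeeping. Second, the cancellation outlined above immediately yields only the symmetric form $\Swan(\rho_\chi\FF\otimes\GGG)=\Swan(\FF\otimes\rho_\chi\GGG)$, whereas the lemma asks for the asymmetric statement; breaking this symmetry will likely require a finer analysis of the tame summand, possibly via the multiplicative convolution interpretation of $\rho_\chi$ at $(1,1)\in\Gmk^2$, or a direct Fourier-theoretic argument exploiting the asymmetric role of $\LL_\chi$ in the definition of $\rho_\chi$.
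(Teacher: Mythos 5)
Your setup (globalizing, identifying the local monodromy of $\FF\ast\LL_\chi$ at $0$ as $\rho_\chi(\FF)$ plus a tame piece of dimension $\Swan(\FF)$, and feeding Lemma \ref{associative} into the Euler--Poincar\'e formula) matches the paper's strategy, but your application of Lemma \ref{associative} with $K=\FF$, $L=\LL_\chi$, $M=\GGG$ is the wrong one, and the gap you flag at the end is real: that choice moves $\LL_\chi$ onto $\GGG$ and can only ever produce the symmetric identity $\Swan(\rho_\chi(\FF)\otimes\GGG)=\Swan(\FF\otimes\rho_\chi(\GGG))$, which is strictly weaker than the lemma and insufficient for its use in Proposition \ref{irred1} (there one needs to compare against $\Swan(\FF\otimes\widehat\FF\otimes\GGG)$ itself, not against a Katz--Radon transform of $\widehat\FF\otimes\GGG$).

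The missing idea is not a finer analysis of the tame summand nor the multiplicative-convolution picture; it is simply the other assignment of roles in Lemma \ref{associative}. Write $\FF\ast\LL_\chi=\LL_\chi\ast\FF$ and apply the lemma with $K=\LL_\chi$, $L=\FF$, $M=\GGG$, so that the right-hand side becomes $\R\Gamma_c(\AAA^1_{\bar k},\LL_\chi\otimes(\tau_{-1}^\ast\FF\ast\GGG))$. Now the Kummer factor is harmless: $\chi(\AAA^1_{\bar k},\LL_\chi\otimes N)=\chi(\Gmk,N)$ for any $N$, and
$$
\chi(\Gmk,\tau_{-1}^\ast\FF\ast\GGG)=\chi(\AAA^1_{\bar k},\tau_{-1}^\ast\FF\ast\GGG)-\chi\bigl((\tau_{-1}^\ast\FF\ast\GGG)_0\bigr),
$$
where the first term is $\chi_c(\FF)\cdot\chi_c(\GGG)=\Swan(\FF)\Swan(\GGG)$ (with the appropriate shifts) by K\"unneth, and the stalk at $0$ is $\R\Gamma_c(\AAA^1_{\bar k},\FF\otimes\GGG)$ by proper base change, of Euler characteristic $-\Swan(\FF\otimes\GGG)$. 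This is exactly where $\Swan(\FF\otimes\GGG)$ enters, with no need to understand the local structure of $\FF\ast\GGG$ at $0$; equating with your computation of the left-hand side, the $\Swan(\FF)\Swan(\GGG)$ terms cancel and the asymmetric identity follows. (Two smaller remarks: the reduction should be to $\FF$ irreducible, hence of a single slope, with $\GGG$ left arbitrary --- there is no need to decompose $\GGG$; and your assertion that $K\ast L$ is lisse on $\Gmk$ with the stated local structure is most cleanly justified, as in the paper, by the Fourier-transform description $\FF\ast\LL_\chi=\FT^{\bar\psi}(\FT^\psi\FF\otimes\LL_{\bar\chi})$ together with Ogg--Shafarevich and Laumon's stationary phase, rather than by a stalkwise Euler-characteristic count.)
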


\begin{proof}
 By additivity of the Swan conductor, we may assume that $\FF$ is irreducible, and in particular that it has a single slope $a\geq 0$. If $a=0$ then $\rho_\chi(\FF)\cong\FF\otimes\LL_\chi$, so the equality is clear. Suppose that $a>0$. By \cite[Theorem 1.5.6]{katz1986local}, $\FF$ and $\GGG$ can be extended to smooth sheaves on $\Gmk$, tamely ramified at infinity, which we will also denote by $\FF$ and $\GGG$. Let $\FF$ and $\GGG$ be also their extensions by zero to $\AAA^1_{\bar k}$.

Using the compatibility between Fourier transform with respect to $\psi$ and convolution \cite[Proposition 1.2.2.7]{laumon1987transformation}, we have
$$
\FF\ast\LL_\chi=\FT^{\bar\psi}(\FT^\psi\FF\otimes\FT^\psi\LL_\chi)=\FT^{\bar\psi}(\FT^\psi\FF\otimes\LL_{\bar\chi}),
$$
where $\FT^\psi\FF$ denotes the ``naive'' Fourier transform in the sense of \cite[8.2]{katz1988gauss}, that is, the $(-1)$-th cohomology sheaf of the Fourier transform of $\FF[1]\in\Dbc(\AAA^1_{\bar k},\QQ)$ (which is its only non-zero cohomology sheaf, since $\FF$ is totally wild at zero and therefore it is Fourier \cite[Lemma 8.3.1]{katz1988gauss}).

Let $n$ be the rank of $\FF$, and denote by $\FF_{(\infty)}\in\RR_\infty$ its local monodromy at infinity, which is a tame representation of $I_\infty$. By Ogg-Shafarevic \cite[Expos\'e X, Corollaire 7.12]{grothendieck1977cohomologie}, $\FT^\psi\FF$ is smooth on $\Gmk$ of rank $na+n=n(a+1)$. By Laumon's local Fourier transform theory \cite[Theorem 13]{katz1988travaux}, $\FT^\psi\FF$ has a single slope $\frac{a}{a+1}$ at infinity, with multiplicity $n(a+1)$, and its monodromy at $0$ has a trivial part of dimension $na$ and its quotient is the dual $\widehat{\FF_{(\infty)}}$ of $\FF_{(\infty)}$. Then $\FT^\psi\FF\otimes\LL_{\bar\chi}$ also has a single slope $\frac{a}{a+1}$ at infinity with multiplicity $n(a+1)$, and its monodromy $\mathcal M$ at $0$ sits in an exact sequence
\begin{equation}\label{eq1}
0\to\LL_{\bar\chi}^{\oplus na}\to{\mathcal M}\to\widehat{\FF_{(\infty)}}\otimes\LL_{\bar\chi}\to 0.
\end{equation}
 Its inverse Fourier transform, by Ogg-Shafarevic, is smooth of rank $n(a+1)$ on $\Gmk$, and by local Fourier transform its wild part at $0$ has slope $a$ with multiplicity $n$. In fact, this wild part is simply $\rho_\chi(\FF)$ by the additive convolution interpretation of $\rho_\chi$. Its monodromy at infinity sits in an exact sequence
\begin{equation}\label{eq2}
0\to\LL_{\chi}^{\oplus na}\to(\FF\ast\LL_\chi)_{(\infty)}\to{\FF_{(\infty)}}\otimes\LL_{\chi}\to 0
\end{equation}
obtained from \eqref{eq1} by local Fourier transform.

So $\FF\ast\LL_\chi$ has rank $n(a+1)$ on $\Gmk$, and its monodromy at $0$ is the direct sum of $\rho_\chi(\FF)$ and a constant part of dimension $na=\Swan(\FF)$. So
$$
\Swan_0((\FF\ast\LL_\chi)\otimes\GGG)=\Swan(\rho_\chi(\FF)\otimes\GGG)+\Swan(\FF)\Swan(\GGG).
$$
In particular, by Ogg-Shafarevic, the Euler characteristic of the sheaf $(\FF\ast\LL_\chi)\otimes\GGG$ (extended by zero to $\AAA^1_{\bar k}$) is $-\Swan(\rho_\chi(\FF)\otimes\GGG)-\Swan(\FF)\Swan(\GGG)$. Using lemma \ref{associative}, proper base change, and the fact that $\chi(\Gmk,K\otimes\LL_\chi)=\chi(\Gmk,K)$ for any object $K\in\Dbc(\Gmk,\QQ)$, we get
\begin{align*}
&\Swan(\rho_\chi(\FF)\otimes\GGG)+\Swan(\FF)\Swan(\GGG)=\chi(\AAA^1_{\bar k},(\FF[1]\ast\LL_\chi[1])\otimes\GGG)= \\
& =\chi(\AAA^1_{\bar k},\LL_\chi\otimes(\tau_{-1}^\ast\FF[1]\ast\GGG[1]))=\chi(\Gmk,\tau_{-1}^\ast\FF[1]\ast\GGG[1])= \\
& =\chi(\AAA^1_{\bar k},\tau_{-1}^\ast\FF[1]\ast\GGG[1])-\mathrm{rank}_0(\tau_{-1}^\ast\FF[1]\ast\GGG[1])= \\
& =\chi(\AAA^1_{\bar k},\FF[1])\chi(\AAA^1_{\bar k},\GGG[1])-\chi(\AAA^1_{\bar k},\FF[1]\otimes\GGG[1])= \\
& =\Swan(\FF)\Swan(\GGG)+\Swan(\FF\otimes\GGG)
\end{align*}
where $\mathrm{rank}_0$ of a derived category object denotes the alternating sum of the ranks at $0$ of its cohomology sheaves, so $\Swan(\rho_\chi(\FF)\otimes\GGG)=\Swan(\FF\otimes\GGG)$.
\end{proof}
  
\begin{prop}\label{irred1}
 Let $\FF\in\RR_0$ be totally wild and irreducible. Then there exists a tame character $\LL_\eta$ of $I_0$ such that $\rho_\chi(\FF)\cong\FF\otimes\LL_\eta$.
\end{prop}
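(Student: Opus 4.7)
The plan is to produce a tame character $\LL_\eta$ together with a nonzero $I_0$-equivariant map $\FF\otimes\LL_\eta\to\rho_\chi(\FF)$; both source and target are irreducible (irreducibility of $\rho_\chi(\FF)$ is automatic, since $\rho_\chi$ is a composition of three equivalences of abelian categories and therefore preserves simple objects), so Schur's lemma will promote such a map to an isomorphism. By the tensor/hom adjunction
$$
\Hom_{I_0}(\LL_\eta,\rho_\chi(\FF)\otimes\FF^\vee)\cong\Hom_{I_0}(\FF\otimes\LL_\eta,\rho_\chi(\FF)),
$$
producing such a map amounts to finding a tame character sitting inside $\rho_\chi(\FF)\otimes\FF^\vee$, i.e.\ to showing that $(\rho_\chi(\FF)\otimes\FF^\vee)^t\neq 0$, since over the algebraically closed coefficient field $\QQ$ any nonzero tame $I_0$-representation has a character in its socle.

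To establish this non-vanishing I would apply Lemma \ref{chartame} in its sufficiency direction to the representation $\rho_\chi(\FF)\otimes\FF^\vee$: it suffices to verify, for every $\HHH\in\RR_0$ with a single sufficiently small positive slope, the strict inequality
$$
\Swan(\rho_\chi(\FF)\otimes\FF^\vee\otimes\HHH)>\Swan(\rho_\chi(\FF)\otimes\FF^\vee)\dim(\HHH).
$$
Two applications of Lemma \ref{sameswan}, first with argument $\FF^\vee\otimes\HHH$ and then with argument $\FF^\vee$, replace each side by the same expression with $\rho_\chi(\FF)$ swapped for $\FF$, reducing the required inequality to
$$
\Swan(\FF\otimes\FF^\vee\otimes\HHH)>\Swan(\FF\otimes\FF^\vee)\dim(\HHH).
$$
But this is the conclusion of Lemma \ref{chartame} in its necessity direction applied to $\FF\otimes\FF^\vee$, whose hypothesis holds trivially: $\FF\otimes\FF^\vee\cong\mathrm{End}(\FF)$ contains the identity endomorphism as an $I_0$-invariant, so the trivial (tame) representation is a subobject and $(\FF\otimes\FF^\vee)^t\neq 0$.

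I do not anticipate a serious obstacle: the argument is a symmetric transport of structure that bounces between $\FF$ and $\rho_\chi(\FF)$ via the Swan-conductor identity of Lemma \ref{sameswan}, combined with the observation that $\mathrm{End}(\FF)$ always has an $I_0$-invariant. The one point to be careful with is that the two appeals to Lemma \ref{chartame} each furnish their own $\epsilon$, but since both are used in the form ``there exists $\epsilon>0$ such that for every small-slope $\HHH$ the inequality holds'', the logical quantifiers line up cleanly. Note that this proposition asserts only the existence of \emph{some} tame $\LL_\eta$; identifying it with the specific character satisfying $\LL_\eta^{\otimes d}\cong\LL_\chi^{\otimes(c+d)}$ from Theorem \ref{main} is a separate matter which will presumably be handled afterwards, e.g.\ by a determinant or central-character calculation.
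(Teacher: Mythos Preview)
Your proposal is correct and matches the paper's own proof essentially step for step: both show $(\rho_\chi(\FF)\otimes\widehat\FF)^t\neq 0$ by invoking Lemma~\ref{chartame}, use Lemma~\ref{sameswan} twice to reduce the required Swan-conductor inequality to the corresponding one for $\FF\otimes\widehat\FF$, and then appeal to Lemma~\ref{chartame} in the other direction together with the obvious tame piece of $\mathrm{End}(\FF)$. The only cosmetic difference is that the paper phrases the nonzero tame part of $\FF\otimes\widehat\FF$ as a trivial \emph{quotient} rather than the identity endomorphism as a trivial \emph{subobject}, and concludes via the same Schur argument you describe.
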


\begin{proof}
 Let $\widehat\FF$ be the dual representation. We claim that the tame part of $\rho_\chi(\FF)\otimes\widehat\FF$ is non-zero. By lemma \ref{chartame}, it suffices to show that there is an $\epsilon>0$ such that, for any $\GGG\in\RR_0$ with slope $b\in(0,\epsilon)$, $\Swan(\rho_\chi(\FF)\otimes\widehat\FF\otimes\GGG)>\Swan(\rho_\chi(\FF)\otimes\widehat\FF)\dim(\GGG)$. But by lemma \ref{sameswan}, we have
$$
 \Swan(\rho_\chi(\FF)\otimes\widehat\FF\otimes\GGG)=\Swan(\FF\otimes\widehat\FF\otimes\GGG)
$$
and
$$
\Swan(\rho_\chi(\FF)\otimes\widehat\FF)=\Swan(\FF\otimes\widehat\FF)
$$
and, since $\widehat\FF$ is the dual of $\FF$, the tensor product $\FF\otimes\widehat\FF$ has a trivial quotient and, in particular, has non-trivial tame part. By lemma \ref{chartame}, there exists $\epsilon>0$ such that, for any $\GGG\in\RR_0$ with slope $b\in(0,\epsilon)$, $\Swan(\FF\otimes\widehat\FF\otimes\GGG)>\Swan(\FF\otimes\widehat\FF)\dim(\GGG)$.

Since the tame part of $\rho_\chi(\FF)\otimes\widehat\FF$ is non-zero and it is a direct summand, it contains a tame character $\LL_{\eta}$ of $I_0$ as a subrepresentation. Then $$\rho_\chi(\FF)\otimes\widehat\FF\otimes\LL_{\bar\eta}=\rho_\chi(\FF)\otimes\widehat{\FF\otimes\LL_{\eta}}=\Hom(\FF\otimes\LL_{\eta},\rho_\chi(\FF))$$ contains a trivial subrepresentation, so $\Hom_{I_0}(\FF\otimes\LL_\eta,\rho_\chi(\FF))\neq 0$.
Since both $\rho_\chi(\FF)$ and $\FF\otimes\LL_\eta$ are irreducible, any non-zero $I_0$-equivariant map $\FF\otimes\LL_\eta\to\rho_\chi(\FF)$ must be an isomorphism.
\end{proof}

\begin{prop}\label{irred2}
Let $\FF\in\RR_0$ be totally wild and irreducible of dimension $n$ and slope $a$, and let $\LL_\eta$ be a tame character of $I_0$ such that $\rho_\chi(\FF)\cong\FF\otimes\LL_\eta$. Then $\LL_\eta^{\otimes n}\cong\LL_\chi^{\otimes n(a+1)}$.
\end{prop}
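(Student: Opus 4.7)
The plan is to pass to determinants and exploit the global structure of $\GGG:=\FF\ast\LL_\chi$ from Lemma \ref{sameswan}. Applying $\det$ to the isomorphism $\rho_\chi(\FF)\cong\FF\otimes\LL_\eta$ provided by Proposition \ref{irred1} gives
$$
\det\rho_\chi(\FF)\cong\det\FF\otimes\LL_\eta^{\otimes n},
$$
so it suffices to show the identity $\det\rho_\chi(\FF)\cong\det\FF\otimes\LL_\chi^{\otimes n(a+1)}$ of $I_0$-characters (note that $n(a+1)=n+\Swan(\FF)$ is an integer).

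First I extend $\FF$ to a smooth sheaf on $\Gmk$ tamely ramified at $\infty$, so that $\GGG$ is smooth of rank $N:=n(a+1)$ on $\Gmk$ and tame at $\infty$, by the proof of Lemma \ref{sameswan}. From the decomposition of $\GGG|_{I_0}$ as the direct sum of $\rho_\chi(\FF)$ and a trivial summand of dimension $na$, together with the exact sequence \eqref{eq2} describing $\GGG|_{I_\infty}$, taking determinants gives
$$
\det\GGG|_{I_0}\cong\det\rho_\chi(\FF),\qquad \det\GGG|_{I_\infty}\cong\det\FF_{(\infty)}\otimes\LL_\chi^{\otimes N}.
$$

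Next I form the rank one smooth sheaf $\mathcal K:=\det\GGG\otimes(\det\FF)^{-1}\otimes\LL_\chi^{\otimes -N}$ on $\Gmk$. The second formula above makes $\mathcal K|_{I_\infty}$ trivial, and Proposition \ref{irred1} makes $\mathcal K|_{I_0}\cong\LL_\eta^{\otimes n}\otimes\LL_\chi^{\otimes -N}$ tame. Hence $\mathcal K$ is a tame rank one sheaf on $\Gmk$ unramified at $\infty$, so it extends to a tame rank one sheaf on $\PP^1_{\bar k}\setminus\{0\}\cong\AAA^1_{\bar k}$, and must be geometrically trivial since $\pi_1(\AAA^1_{\bar k})$ has no nontrivial tame quotient. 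This yields $\det\GGG\cong\det\FF\otimes\LL_\chi^{\otimes N}$ on $\Gmk$; restricting to $I_0$ produces $\det\rho_\chi(\FF)\cong\det\FF|_{I_0}\otimes\LL_\chi^{\otimes n(a+1)}$, and comparing with the first display concludes $\LL_\eta^{\otimes n}\cong\LL_\chi^{\otimes n(a+1)}$.

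The key inputs are the local monodromy descriptions of $\FF\ast\LL_\chi$ at $0$ and at $\infty$ already contained in the proof of Lemma \ref{sameswan}, combined with the rigidity of tame rank one sheaves on $\Gmk$ unramified at $\infty$; assembling them into a global determinant computation is the main point. No further input beyond Proposition \ref{irred1} is needed.
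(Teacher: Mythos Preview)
Your proof is correct and follows essentially the same strategy as the paper: both take the determinant of the global convolution sheaf $\FF\ast\LL_\chi$, compute its local monodromies at $0$ and $\infty$ via the structure established in Lemma \ref{sameswan}, and then invoke the triviality of the tame fundamental group of $\AAA^1_{\bar k}$ to compare them. The only difference is cosmetic: the paper twists so that the resulting rank-one sheaf is trivial at $0$ and reads off the conclusion at $\infty$, whereas you twist so that it is trivial at $\infty$ and read off the conclusion at $0$. (One small remark: the isomorphism $\rho_\chi(\FF)\cong\FF\otimes\LL_\eta$ is a hypothesis of the proposition, so you need not invoke Proposition \ref{irred1} for it.)
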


\begin{proof}
 Extend $\FF$ to a smooth $\ell$-adic sheaf on $\Gmk$, tamely ramified at infinity, also denoted by $\FF$. Let $\FF$ also denote its extension by zero to $\AAA^1_{\bar k}$. By the proof of lemma \ref{sameswan}, the sheaf $\FF\ast\LL_\chi$ is smooth on $\Gmk$, its monodromy at $0$ is the direct sum of $\rho_\chi(\FF)\cong\FF\otimes\LL_\eta$ and a trivial part of dimension $na$, and its monodromy at infinity sits in the exact sequence \eqref{eq2}. Its determinant is then a smooth sheaf of rank $1$ on $\Gmk$, whose monodromy at $0$ is $\det(\FF)\otimes\LL_\eta^{\otimes n}$, and whose monodromy at $\infty$ is $\det(\FF_{(\infty)})\otimes\LL_\chi^{\otimes n(a+1)}$.

Then $\widehat{\det(\FF)}\otimes\LL_{\bar\eta}^{\otimes n}\otimes\det(\FF\ast\LL_\chi)$ is a rank 1 smooth sheaf on $\Gmk$, with trivial monodromy at $0$ and tamely ramified at infinity. Since the tame fundamental group of $\AAA^1_{\bar k}$ is trivial, we conclude that
$$
\det(\FF\ast\LL_\chi)\cong\det(\FF)\otimes\LL_{\eta}^{\otimes n}
$$
as sheaves on $\Gmk$. Comparing their monodromies at infinity gives the desired isomorphism.
\end{proof}

It remains to show that any such $\LL_\eta$ works.

\begin{lem}\label{induced}
 Let $\FF\in\RR_0$ be irreducible of dimension $n$, and let $\LL_\eta$ be a tame character of $I_0$ such that $\LL_\eta^{\otimes n}$ is trivial. Then $\FF\otimes\LL_\eta\cong\FF$.
\end{lem}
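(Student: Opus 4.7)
The plan is to express $\FF$ as a monomial induced representation and to reduce the claim to showing that the tame character $\LL_\eta$ restricts trivially to the induction subgroup. Concretely, I would invoke the fact that any finite-dimensional continuous irreducible $\ell$-adic representation of $I_0$ is of the form $\mathrm{Ind}_H^{I_0}\chi$ for some open subgroup $H$ and some one-dimensional character $\chi$ of $H$; this monomiality reflects the fact that the finite quotients of $I_0$ are extensions of a cyclic prime-to-$p$ group by a $p$-group. Such an $H$ necessarily has index $n$ in $I_0$, and by Galois theory $H=\Gal(\bar K/L)$ for $K=\bar k((t))$ and a totally ramified degree-$n$ extension $L/K$.

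The central step is to show that $\LL_\eta|_H$ is trivial. Let $m$ denote the order of $\LL_\eta$. Since $\LL_\eta$ is tame we have $\gcd(m,p)=1$, and by hypothesis $m\mid n$. The extension $L/K$ fits into a unique tower $K\subset K^{\mathrm{tame}}\subset L$, where $K^{\mathrm{tame}}=K(t^{1/e})$ is the maximal tame subextension of $L/K$ with $[K^{\mathrm{tame}}:K]=e$ prime to $p$ and $[L:K^{\mathrm{tame}}]=p^s$. Since $n=ep^s$ and $m$ divides $n$ but is coprime to $p$, we have $m\mid e$, hence $K(t^{1/m})\subset K^{\mathrm{tame}}\subset L$. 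Translating back via Galois correspondence, $H\subset\Gal(\bar K/K(t^{1/m}))=\ker(\LL_\eta)$, so $\LL_\eta|_H$ is trivial.

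The lemma then follows immediately from the projection formula:
$$
\FF\otimes\LL_\eta \cong \mathrm{Ind}_H^{I_0}(\chi)\otimes\LL_\eta \cong \mathrm{Ind}_H^{I_0}(\chi\otimes\LL_\eta|_H) = \mathrm{Ind}_H^{I_0}(\chi) \cong \FF.
$$
The one non-elementary ingredient here is the monomiality of $\FF$, which is the main point that needs to be pinned down. If one prefers not to appeal to that general structural result, the argument can instead be carried out by induction on $n$ using the decomposition $\FF\cong\mathrm{Ind}_{I_0^{(d)}}^{I_0}\GGG$ (where $d$ is the denominator of the slope of $\FF$ and $\GGG$ is irreducible of dimension $n/d$ with integer slope), verifying that the order of $\LL_\eta|_{I_0^{(d)}}$ still divides $n/d$ and then applying the inductive hypothesis to $\GGG$.
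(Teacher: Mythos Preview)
Your strategy---write $\FF$ as an induced representation, show that $\LL_\eta$ restricts trivially to the inducing subgroup, then apply the projection formula---is exactly the paper's. The difference is in the choice of inducing subgroup. You invoke full monomiality, inducing from a one-dimensional character of an open subgroup $H$ of index $n$; the paper instead writes $n=n_0p^\alpha$ with $(n_0,p)=1$ and cites \cite[1.14.2]{katz1988gauss} to express $\FF$ as induced from a $p^\alpha$-dimensional representation of the unique open subgroup $I_0(n_0)$ of index $n_0$. Since $\LL_\eta$ is tame of order dividing $n$, its order already divides $n_0$, so $\LL_\eta$ is trivial on $I_0(n_0)$ and the projection formula finishes. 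Your field-theoretic argument via $K\subset K^{\mathrm{tame}}\subset L$ is essentially this same observation (your $e$ equals $n_0$), just pushed one step further than necessary.

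The gap is in your justification of monomiality. That finite quotients of $I_0$ are extensions of a cyclic prime-to-$p$ group by a $p$-group does not make them M-groups: for $p=2$, the group $SL_2(\mathbb{F}_3)$ has precisely this shape (normal $Q_8$ with cyclic quotient $\mathbb{Z}/3$), yet its two-dimensional irreducibles are not monomial because it has no subgroup of index $2$. Whether every irreducible $I_0$-representation is in fact monomial is subtler than you indicate and, more to the point, unnecessary here---the weaker structural input \cite[1.14.2]{katz1988gauss} suffices. Your proposed inductive alternative via the slope denominator $d$ also does not terminate as written: once $\GGG$ has integer slope the denominator is $1$ and the dimension no longer decreases, so the case $n=p^\alpha$ is never reduced.
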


\begin{proof}
 Write $n=n_0p^\alpha$, where $\alpha\geq 0$ and $n_0$ is prime to $p$. Since the $p$-th power operation permutes the tame characters of $I_0$ preserving their order, $\LL_\eta^{\otimes n_0}$ must be the trivial character. Now by \cite[1.14.2]{katz1988gauss}, $\FF$ is induced from a $p^\alpha$-dimensional representation $\GGG$ of $I_0(n_0)$, the unique open subgroup of $I_0$ of index $n_0$. Then
$$
\FF\otimes\LL_\eta=(\mathrm{Ind}^{I_0}_{I_0(n_0)}\GGG)\otimes\LL_\eta\cong\mathrm{Ind}^{I_0}_{I_0(n_0)}(\GGG\otimes\mathrm{Res}^{I_0}_{I_0(n_0)}\LL_\eta)=\mathrm{Ind}^{I_0}_{I_0(n_0)}(\GGG)=\FF
$$
since the restriction of $\LL_\eta$ to $I_0(n_0)$ is trivial.
\end{proof}

We can now finish the proof of theorem \ref{main} for irreducible representations

\begin{prop}
 Let $\FF\in\RR_0$ be irreducible of slope $a>0$. Write $a=c/d$, where $c$ and $d$ are relatively prime positive integers. Let $\LL_\eta$ be any tame character of $I_0$ such that $\LL_\eta^{\otimes d}=\LL_\chi^{\otimes(c+d)}$. Then
$$
\rho_\chi(\FF)\cong\FF\otimes\LL_\eta.
$$
\end{prop}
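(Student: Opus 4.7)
The plan is to combine the three preceding results. By Proposition~\ref{irred1}, since $\FF$ is irreducible of slope $a>0$ it is totally wild, so there exists \emph{some} tame character $\LL_{\eta'}$ of $I_0$ with $\rho_\chi(\FF)\cong\FF\otimes\LL_{\eta'}$. Proposition~\ref{irred2} then pins this character down up to $n$-th powers: $\LL_{\eta'}^{\otimes n}\cong\LL_\chi^{\otimes n(a+1)}$, where $n=\dim\FF$. What remains is to show that any candidate $\LL_\eta$ satisfying $\LL_\eta^{\otimes d}\cong\LL_\chi^{\otimes(c+d)}$ produces the same twist of $\FF$, i.e.\ that $\FF\otimes\LL_\eta\cong\FF\otimes\LL_{\eta'}$.

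The key arithmetic observation is that $d\mid n$; this is forced by the integrality of the Swan conductor, since $\Swan(\FF)=na=nc/d$ must lie in $\Z$ while $\gcd(c,d)=1$. Writing $n=dm$ gives
\[
\LL_{\eta'}^{\otimes n}\cong\LL_\chi^{\otimes n(a+1)}=\LL_\chi^{\otimes m(c+d)}\cong\bigl(\LL_\eta^{\otimes d}\bigr)^{\otimes m}=\LL_\eta^{\otimes n},
\]
so the tame character $\LL_\eta\otimes\LL_{\bar\eta'}$ has trivial $n$-th power. Applying Lemma~\ref{induced} to the irreducible $\FF$ of dimension $n$ and to this character yields $\FF\otimes\LL_\eta\otimes\LL_{\bar\eta'}\cong\FF$; twisting back by $\LL_{\eta'}$ gives
\[
\FF\otimes\LL_\eta\cong\FF\otimes\LL_{\eta'}\cong\rho_\chi(\FF).
\]

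I do not expect any real obstacle here: the three substantive ingredients (existence of \emph{some} twist in Proposition~\ref{irred1}, the $n$-th power identification in Proposition~\ref{irred2}, and invariance of an irreducible representation under tensoring by a tame character of order dividing its dimension in Lemma~\ref{induced}) have already been established, and only the elementary divisibility $d\mid n$ needs to be noted. The argument is therefore reduced to bookkeeping with tame characters.
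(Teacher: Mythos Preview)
Your proposal is correct and follows essentially the same approach as the paper: invoke Propositions~\ref{irred1} and~\ref{irred2} to get an $\LL_{\eta'}$ with $\rho_\chi(\FF)\cong\FF\otimes\LL_{\eta'}$ and $\LL_{\eta'}^{\otimes n}\cong\LL_\chi^{\otimes n(a+1)}$, observe that $d\mid n$ from integrality of the Swan conductor, deduce that $(\LL_\eta\otimes\LL_{\bar\eta'})^{\otimes n}$ is trivial, and then apply Lemma~\ref{induced}. The paper's argument is identical in substance and structure.
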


\begin{proof}
 Let $n$ be the dimension of $\FF$. By propositions \ref{irred1} and \ref{irred2}, there exists a tame character $\LL_{\eta'}$ of $I_0$ such that $\rho_\chi(\FF)\cong\FF\otimes\LL_{\eta'}$, and $\LL_{\eta'}^{\otimes n}\cong\LL_\chi^{\otimes n(a+1)}$. Since the Swan conductor $na=nc/d$ of $\FF$ is an integer, $n$ must be divisible by $d$. Then
\begin{align*}
(\LL_{\bar\eta'}\otimes\LL_{\eta})^{\otimes n}&=\LL_{\bar\eta'}^{\otimes n}\otimes\LL_{\eta}^{\otimes d(n/d)}=\\ &\LL_{\bar\chi}^{\otimes n(a+1)}\otimes\LL_{\chi}^{\otimes(c+d)n/d}=\LL_{\bar\chi}^{\otimes n(a+1)}\otimes\LL_{\chi}^{\otimes n(a+1)}={\mathbf 1}
\end{align*}
so, by lemma \ref{induced},
$$
\rho_\chi(\FF)\cong\FF\otimes\LL_{\eta'}\cong(\FF\otimes\LL_{\eta'})\otimes(\LL_{\bar\eta'}\otimes\LL_{\eta})=\FF\otimes\LL_\eta.
$$
\end{proof}

\begin{proof}[{\bf Proof of theorem \ref{main}}]
 The functors $\RR^a_0\to\RR^a_0$ given by $\FF\mapsto\rho_\chi(\FF)$ and $\FF\mapsto\FF\otimes\LL_\eta$ are equivalences of categories, so they preserve direct sums. It is enough then to prove the isomorphism for indecomposable representations.

So let $\FF\in\RR_0^a$ be indecomposable of length $m$. Then by \cite[Lemma 3.1.6, Lemma 3.1.7(3)]{katz1996rls} there exist an irreducible $\FF_0\in\RR_0^a$ and a (necessarily tame) indecomposable unipotent ${\mathcal U}_m\in\RR_0$ of dimension $m$ such that $\FF=\FF_0\otimes{\mathcal U}_m$. Since $\FF$ is a succesive extension of $m$ copies of $\FF_0$, by exactness $\rho_\chi(\FF)$ is a succesive extension of $m$ copies of $\rho_\chi(\FF_0)\cong\FF_0\otimes\LL_\eta$, which is irreducible. By \cite[Lemma 3.1.7(2)]{katz1996rls}, there is a unipotent $\mathcal U\in\RR_0$ of dimension $m$ such that $\rho_\chi(\FF)\cong\FF_0\otimes\LL_\eta\otimes{\mathcal U}$.

Since $\rho_\chi$ is an equivalence of categories, $\rho_\chi(\FF)$ must be indecomposable, so $\mathcal U$ itself must be indecomposable. Therefore ${\mathcal U}\cong{\mathcal U}_m$ and
$$
\rho_\chi(\FF)\cong\FF_0\otimes\LL_\eta\otimes{\mathcal U}_m\cong\FF\otimes\LL_\eta.
$$ 
\end{proof}

\section{Some variants}

We will consider now representations of the inertia group $I_\infty$ at infinity. For any $\FF\in\RR_\infty$ of slope $>1$, we can take its local Fourier transform $\FT^\psi_{(\infty,\infty)}\FF$, which is again in the same category. In \cite[3.4.4]{katz1996rls}, N. Katz asks about a simple formula for
$$
\rho_\chi'(\FF):=\FT_{(\infty,\infty)}^{\psi,-1}(\LL_{\bar\chi}\otimes\FT^\psi_{(\infty,\infty)}\FF),
$$
which is an auto-equivalence of the category of continuous $\ell$-adic representations of $\RR_\infty$ with slopes $>1$. It can be interpreted as the wild part of the monodromy at infinity of the (additive) convolution $\FF\ast\LL_\chi$ \cite[3.4.6]{katz1996rls}, where $\FF$ is any extension of the representation $\FF$ to a smooth sheaf on $\Gmk$ tamely ramified at $0$. In this section we will prove

\begin{thm}\label{main2}
 Let $\FF\in\RR_\infty$ be totally wild with a single slope $a>1$. Write $a=c/d$, where $c$ and $d$ are relatively prime positive integers. Let $\LL_\eta$ be any tame character of $I_\infty$ such that $\LL_\eta^{\otimes d}=\LL_{\bar\chi}^{\otimes(c-d)}$. Then
$$
\rho'_\chi(\FF)\cong\FF\otimes\LL_\eta.
$$ 
\end{thm}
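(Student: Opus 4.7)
The proof of Theorem \ref{main2} mirrors that of Theorem \ref{main} with the roles of $0$ and $\infty$ exchanged: the $(0,\infty)$ local Fourier transform is replaced by $(\infty,\infty)$, and extensions to smooth sheaves on $\Gmk$ tamely ramified at $\infty$ are replaced by extensions tamely ramified at $0$. The analog of Lemma \ref{chartame} for $\RR_\infty$ in place of $\RR_0$ transfers verbatim, since its proof uses only the slope filtration and \cite[Lemma 1.3]{katz1988gauss}, which are formally identical at any place.

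The central step is the analog of Lemma \ref{sameswan}, namely
$$
\Swan(\rho'_\chi(\FF)\otimes\GGG) = \Swan(\FF\otimes\GGG)
$$
for $\FF \in \RR_\infty$ totally wild of slope $a > 1$ and any $\GGG \in \RR_\infty$. I would extend $\FF$ to a smooth sheaf on $\Gmk$ tamely ramified at $0$ via \cite[Theorem 1.5.6]{katz1986local}, and analyze $\FF\ast\LL_\chi = \FT^{\bar\psi}(\FT^\psi\FF\otimes\LL_{\bar\chi})$. Laumon's local Fourier transform theory shows that $\FT^\psi\FF$ is smooth on $\Gmk$ of generic rank $na$, whose local monodromy at $\infty$ decomposes as a slope-$a/(a-1)$ summand of rank $n(a-1)$ (coming from $\FF_{(\infty)}$ via $\FT^\psi_{(\infty,\infty)}$) together with a slope-$<1$ summand of rank $n$ (coming from the tame $\FF_{(0)}$ via $\FT^\psi_{(0,\infty)}$). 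Tensoring with $\LL_{\bar\chi}$ and applying the inverse Fourier transform yields $\FF\ast\LL_\chi$, smooth on $\Gmk$ of generic rank $n(a+1)$, whose local monodromy at $\infty$ is $\rho'_\chi(\FF)$ (of rank $n$ and slope $a$) in direct sum with a tame summand of rank $na$. The identity of Lemma \ref{associative}, together with the invariance of Euler characteristics under tame twists, then forces the claimed Swan equality by the same chain of equalities as in the proof of Lemma \ref{sameswan}.

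Once these two lemmas are established, the analog of Proposition \ref{irred1} is formal: for irreducible totally wild $\FF\in\RR_\infty^a$ the representation $\FF\otimes\widehat\FF$ has a trivial quotient, so the preceding lemmas force the tame part of $\rho'_\chi(\FF)\otimes\widehat\FF$ to be nonzero, producing a tame character $\LL_{\eta'}$ of $I_\infty$ with $\rho'_\chi(\FF)\cong\FF\otimes\LL_{\eta'}$. The analog of Proposition \ref{irred2} then proceeds by computing the determinant of the rank-$n(a+1)$ smooth sheaf $\FF\ast\LL_\chi$ on $\Gmk$: its monodromy at $\infty$ is the tensor of $\det(\FF_{(\infty)})\otimes\LL_{\eta'}^{\otimes n}$ (from $\rho'_\chi(\FF)$) with the determinant of the tame rank-$na$ summand, while its monodromy at $0$ is determined by the tame local structure there, itself built from $\FF_{(0)}$ and $\LL_\chi$. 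Since a tame rank-$1$ sheaf on $\Gmk$ is determined by its monodromy at either puncture, this comparison yields $\LL_{\eta'}^{\otimes n}\cong\LL_{\bar\chi}^{\otimes n(a-1)}$. Finally, Lemma \ref{induced} allows $\LL_{\eta'}$ to be replaced by any $\LL_\eta$ satisfying $\LL_\eta^{\otimes d}\cong\LL_{\bar\chi}^{\otimes(c-d)}$ (via the same calculation $(\LL_{\bar\eta'}\otimes\LL_\eta)^{\otimes n}=\mathbf 1$ as in the irreducible case of Theorem \ref{main}), and the reduction to indecomposable representations via \cite[Lemma 3.1.6, Lemma 3.1.7]{katz1996rls} concludes the proof.

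The principal obstacle is identifying the tame rank-$na$ summand of $(\FF\ast\LL_\chi)_{(\infty)}$ and the tame local structure at $0$ precisely enough to execute the determinant calculation. The $(\infty,\infty)$ local Fourier transform is less symmetric than $(0,\infty)$: here the rank of the wild summand of $\FT^\psi\FF$ is $n(a-1)$, shrinking rather than growing relative to $\FF_{(\infty)}$, and this shrinking together with the orientation reversal inherent in swapping $0$ and $\infty$ is precisely what substitutes $\bar\chi$ and $c-d$ for $\chi$ and $c+d$ in the final formula. Setting up the correct analog of the exact sequence \eqref{eq2} in this setting is where the bulk of the bookkeeping lies; once done, the remainder of the argument is a formal transcription of the proof of Theorem \ref{main}.
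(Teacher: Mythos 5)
Your proposal is correct and follows essentially the same route as the paper: the paper proves the $I_\infty$ analogues of Lemma \ref{sameswan} (via the stationary-phase description of $\FT^\psi\FF$, whose wild part at infinity has rank $n(a-1)$ and slope $a/(a-1)$ and whose tame part is $\widehat{\FF_{(0)}}$), of Propositions \ref{irred1} and \ref{irred2} (the determinant of $\FF\ast\LL_\chi$ compared at $0$ and $\infty$, with the tame rank-$na$ summand at infinity identified as $\LL_\chi^{\oplus na}$ and the monodromy at $0$ as a trivial part of rank $na$ with quotient $\LL_\chi\otimes\FF_{(0)}$), and then concludes exactly as for $\rho_\chi$ via Lemma \ref{induced} and the reduction to indecomposables. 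The "bookkeeping" you flag is precisely what the paper supplies from \cite[Remark 9]{katz1988travaux}, and your accounting of why $c+d$ becomes $c-d$ matches the paper's computation.
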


In other words, we have the formula 
\begin{equation}
 \rho'_\chi(\FF)\cong\FF\otimes\LL_{\bar\chi}^{\otimes(a-1)}
\end{equation}
where $\LL_{\bar\chi}^{\otimes(a-1)}$ stands for ``any character that can reasonably be called $\LL_{\bar\chi}^{\otimes(a-1)}$''.

The proof is very similar to the one for $\rho_\chi$. Since every representation in $\RR_\infty$ is a direct sum of representations with single slopes, we can assume that $\FF$ has a single slope $a$.

\begin{lem}\label{sameswan2}
 Let $\FF,\GGG\in\RR_\infty$ be totally wild, with $\FF$ having all slopes $>1$. Then 
$$
\Swan(\rho'_\chi(\FF)\otimes\GGG)=\Swan(\FF\otimes\GGG).
$$
\end{lem}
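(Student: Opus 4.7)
The plan is to mirror the proof of Lemma \ref{sameswan}, exchanging the roles of $0$ and $\infty$ throughout. By additivity of the Swan conductor I may assume $\FF$ is irreducible with single slope $a>1$. By \cite[Theorem 1.5.6]{katz1986local}, I extend $\FF$ and $\GGG$ to smooth $\ell$-adic sheaves on $\Gmk$ that are \emph{tamely ramified at $0$} (the mirror of the original setup) and then by zero to $\AAA^1_{\bar k}$, denoting the extensions by the same letters.

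The structural heart of the proof is to identify the local monodromy at $\infty$ of the convolution sheaf $\FF\ast\LL_\chi$. Using $\FF\ast\LL_\chi\cong\FT^{\bar\psi}(\FT^\psi\FF\otimes\LL_{\bar\chi})$ and Laumon's local Fourier transform theory, $\FT^\psi\FF$ is smooth of rank $na$ on all of $\AAA^1_{\bar k}$ (trivial at $0$, since $\FF$ has no slope $<1$ component at $\infty$), and its monodromy at $\infty$ splits as a tame piece of dimension $n$ (the image of the tame $\FF_{(0)}$ under $\FT^\psi_{(0,\infty)}$) together with a piece of slope $a/(a-1)$ and dimension $n(a-1)$ (the image of $\FF_{(\infty)}$ under $\FT^\psi_{(\infty,\infty)}$). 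Tensoring with $\LL_{\bar\chi}$ preserves each slope class (no slope $1$ appears, since $\LL_{\bar\chi}$ is tame), and applying $\FT^{\bar\psi}$ yields a short exact sequence of $I_\infty$-representations
$$
0\to T\to(\FF\ast\LL_\chi)_{(\infty)}\to\rho'_\chi(\FF)\to 0
$$
analogous to (\ref{eq2}), with $T$ tame of dimension $na=\Swan(\FF)$. Meanwhile $\FF\ast\LL_\chi$ is itself tame at $0$, since its non-trivial monodromy there is $\FT^{\bar\psi}_{(\infty,0)}$ applied to a tame rank-$n$ summand.

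Once this description is in place the computation is a formal translation of the $0$-case. Since $\GGG$ is tame at $0$, the tensor $(\FF\ast\LL_\chi)\otimes\GGG$ is tame at $0$, while additivity of the Swan conductor along the slope filtration at $\infty$ gives
$$
\Swan_\infty\bigl((\FF\ast\LL_\chi)\otimes\GGG\bigr)=\Swan(\rho'_\chi(\FF)\otimes\GGG)+\Swan(\FF)\Swan(\GGG).
$$
Ogg-Shafarevich then yields $\chi(\AAA^1_{\bar k},(\FF\ast\LL_\chi)\otimes\GGG)=-\Swan(\rho'_\chi(\FF)\otimes\GGG)-\Swan(\FF)\Swan(\GGG)$. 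Applying Lemma \ref{associative} (with $K=\LL_\chi[1]$, $L=\FF[1]$, $M=\GGG[1]$), the invariance $\chi(\Gmk,K\otimes\LL_\chi)=\chi(\Gmk,K)$, and Künneth on the resulting convolution, the same Euler characteristic also rewrites as $-\Swan(\FF)\Swan(\GGG)-\Swan(\FF\otimes\GGG)$ exactly as in Lemma \ref{sameswan}; equating gives the desired equality.

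The main obstacle is the structural description of $(\FF\ast\LL_\chi)_{(\infty)}$. One has to verify both that no slope $1$ component is produced when $\FT^\psi\FF$ is tensored with $\LL_{\bar\chi}$ or by the subsequent inverse local Fourier transforms, and that $\FT^{\bar\psi}_{(0,\infty)}$ applied to the monodromy $\LL_{\bar\chi}^{\oplus na}$ at $0$ of $\FT^\psi\FF\otimes\LL_{\bar\chi}$ genuinely produces a tame summand $T$ of the claimed dimension. Once the analogue of (\ref{eq2}) is established, the Euler-characteristic manipulation from Lemma \ref{sameswan} carries over without essential change.
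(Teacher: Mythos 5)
Your proposal is correct and follows essentially the same route as the paper: reduce to a single slope $a>1$, extend tamely at $0$, use Laumon's local Fourier transform theory to show that $(\FF\ast\LL_\chi)_{(\infty)}$ is $\rho'_\chi(\FF)$ plus a tame piece of dimension $na=\Swan(\FF)$ (the paper identifies it as $\LL_\chi^{\oplus na}$, a direct summand rather than just a subobject, but this makes no difference) while $\FF\ast\LL_\chi$ stays tame at $0$, and then run the same Euler-characteristic computation via Lemma \ref{associative}. The only cosmetic difference is that the paper records the tame part of $\FT^\psi\FF$ at infinity as the dual $\widehat{\FF_{(0)}}$, which is immaterial for the Swan count.
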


\begin{proof}
 We can assume that $\FF$ has a single slope $a>1$. Extend $\FF$ and $\GGG$ to smooth sheaves on $\Gmk$, tamely ramified at $0$, which we will also denote by $\FF$ and $\GGG$ (as well as their extensions by zero to $\AAA^1_{\bar k}$).

Let $n$ be the rank of $\FF$, and denote by $\FF_{(0)}$ its local monodromy at $0$, which is a tame representation of $I_0$. Since all slopes of $\FF$ at infinity are $>1$, it is a Fourier sheaf \cite[Lemma 8.3.1]{katz1988gauss}, so its Fourier transform is a single sheaf that we will denote by $\FT^\psi\FF$. By Ogg-Shafarevic, $\FT^\psi\FF$ is smooth on $\Gmk$ of rank $na$. By Laumon's local Fourier transform theory \cite[Remark 9]{katz1988travaux}, it has a single positive slope $\frac{a}{a-1}$ at infinity with multiplicity $n(a-1)$ and tame part isomorphic to $\widehat{\FF_{(0)}}$, and it is unramified at $0$. Then $\FT^\psi\FF\otimes\LL_{\bar\chi}$ also has a single slope $\frac{a}{a-1}$ at infinity with multiplicity $n(a-1)$, tame part isomorphic to $\LL_{\bar\chi}\otimes\widehat{\FF_{(0)}}$, and its monodromy at $0$ is a direct sum of $na$ copies of $\LL_{\bar\chi}$. 

 Its inverse Fourier transform, by Ogg-Shafarevic, is smooth of rank $n(a-1)\frac{a}{a-1}+n=n(a+1)$ on $\Gmk$, and by local Fourier transform its monodromy at infinity is the direct sum of $\rho'_\chi(\FF)$ and $na=\Swan(\FF)$ copies of $\LL_\chi$. At $0$ is has trivial part of rank $na$, whith quotient isomorphic to $\LL_\chi\otimes\FF_{(0)}$. So
$$
\Swan_\infty((\FF\ast\LL_\chi)\otimes\GGG)=\Swan(\rho'_\chi(\FF)\otimes\GGG)+\Swan(\FF)\Swan(\GGG).
$$
We conclude exactly as in lemma \ref{sameswan}.
\end{proof}

Using lemma \ref{chartame} as in proposition \ref{irred1} we deduce

\begin{prop}\label{irred3}
 Let $\FF\in\RR_\infty$ be irreducible with slope $>1$. Then there exists a tame character $\LL_\eta$ of $I_\infty$ such that $\rho'_\chi(\FF)\cong\FF\otimes\LL_\eta$.
\end{prop}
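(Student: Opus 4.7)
The plan is to mimic, \emph{mutatis mutandis}, the proof of Proposition \ref{irred1}, substituting Lemma \ref{sameswan2} for Lemma \ref{sameswan} and working at $\infty$ instead of at $0$. Let $\widehat\FF$ denote the contragredient; it is also irreducible of slope $a>1$, hence totally wild. I claim the tame part of $\rho'_\chi(\FF)\otimes\widehat\FF$ is non-zero. Lemma \ref{chartame} (whose statement and proof carry over word-for-word from $\RR_0$ to $\RR_\infty$, since the arguments only use general properties of slopes and Katz's $[n]_\ast$ construction for producing representations of small prescribed slope, all of which are insensitive to which inertia group we work with) reduces this to exhibiting an $\epsilon>0$ such that
$$
\Swan(\rho'_\chi(\FF)\otimes\widehat\FF\otimes\GGG)>\Swan(\rho'_\chi(\FF)\otimes\widehat\FF)\,\dim(\GGG)
$$
for every $\GGG\in\RR_\infty$ with a single slope $b\in(0,\epsilon)$.

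To produce this $\epsilon$, I apply Lemma \ref{sameswan2} twice. Taking $\GGG$ there equal to $\widehat\FF$ (which is totally wild) gives $\Swan(\rho'_\chi(\FF)\otimes\widehat\FF)=\Swan(\FF\otimes\widehat\FF)$. Taking $\GGG$ there equal to $\widehat\FF\otimes\GGG$ (which is totally wild because $\widehat\FF$ has slope $a>1>b$, so by \cite[Lemma 1.3]{katz1988gauss} the tensor product still has single slope $a$) gives $\Swan(\rho'_\chi(\FF)\otimes\widehat\FF\otimes\GGG)=\Swan(\FF\otimes\widehat\FF\otimes\GGG)$. Since $\FF\otimes\widehat\FF$ contains the trivial representation as a direct summand, it has non-trivial tame part, so Lemma \ref{chartame} applied to $\FF\otimes\widehat\FF$ produces the required $\epsilon$ and the strict inequality for $\FF\otimes\widehat\FF\otimes\GGG$, which transfers to $\rho'_\chi(\FF)\otimes\widehat\FF\otimes\GGG$ via the two Swan equalities above.

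Since the tame part of $\rho'_\chi(\FF)\otimes\widehat\FF$ is a direct summand by the canonical decomposition \eqref{directsum}, it contains some one-dimensional tame character $\LL_\eta$ of $I_\infty$ as a subrepresentation. Rewriting,
$$
\Hom_{I_\infty}(\FF\otimes\LL_\eta,\rho'_\chi(\FF))=\bigl(\rho'_\chi(\FF)\otimes\widehat{\FF\otimes\LL_\eta}\bigr)^{I_\infty}\neq 0.
$$
Now $\FF\otimes\LL_\eta$ is irreducible, and $\rho'_\chi(\FF)$ is irreducible because $\rho'_\chi$ is an auto-equivalence of $\RR_\infty^{>1}$. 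Hence any non-zero $I_\infty$-equivariant map between them is an isomorphism.

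The only genuine step to check carefully is that Lemma \ref{sameswan2} is applicable in both instances; this reduces to verifying that $\widehat\FF$ and $\widehat\FF\otimes\GGG$ are totally wild, which is immediate from the slope hypothesis $a>1$ and the Katz slope-of-tensor-product lemma. Everything else is formal and parallels the $I_0$ argument exactly.
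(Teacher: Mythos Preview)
Your proof is correct and follows exactly the approach the paper intends: the paper's proof of Proposition \ref{irred3} consists solely of the remark ``Using lemma \ref{chartame} as in proposition \ref{irred1} we deduce,'' and you have faithfully carried out that transposition, including the check that the auxiliary representations $\widehat\FF$ and $\widehat\FF\otimes\GGG$ are totally wild so that Lemma \ref{sameswan2} applies. One cosmetic point: it suffices that $\FF\otimes\widehat\FF$ has a trivial sub or quotient (as in the paper's phrasing for Proposition \ref{irred1}) rather than a trivial direct summand, but this does not affect the argument since all you use is that the tame part is nonzero.
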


\begin{prop}\label{irred4}
Let $\FF\in\RR_\infty$ be irreducible of dimension $n$ and slope $a>1$, and let $\LL_\eta$ be a tame character of $I_\infty$ such that $\rho'_\chi(\FF)\cong\FF\otimes\LL_\eta$. Then $\LL_\eta^{\otimes n}\cong\LL_{\bar\chi}^{\otimes n(a-1)}$.
\end{prop}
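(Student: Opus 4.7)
The plan is to adapt the determinant-computation argument of Proposition \ref{irred2} to the $\infty$-side, keeping careful track of which character ($\LL_\chi$ or $\LL_{\bar\chi}$) shows up where, since the local Fourier transform $\FT^\psi_{(\infty,\infty)}$ swaps these differently from $\FT^\psi_{(0,\infty)}$.

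First I would extend $\FF$ to a smooth sheaf on $\Gmk$ tamely ramified at $0$ (also denoted $\FF$), and import the structural description of $\FF\ast\LL_\chi$ from the proof of Lemma \ref{sameswan2}: it is smooth of rank $n(a+1)$ on $\Gmk$, its monodromy at $\infty$ decomposes as $\rho'_\chi(\FF)\oplus\LL_\chi^{\oplus na}$, and its monodromy at $0$ fits in an exact sequence of tame representations
$$
0\to\mathbf{1}^{\oplus na}\to(\FF\ast\LL_\chi)_{(0)}\to\LL_\chi\otimes\FF_{(0)}\to 0.
$$
Taking determinants and using the hypothesis $\rho'_\chi(\FF)\cong\FF\otimes\LL_\eta$, the rank-one smooth sheaf $\det(\FF\ast\LL_\chi)$ on $\Gmk$ has monodromy $\det(\FF_{(\infty)})\otimes\LL_\eta^{\otimes n}\otimes\LL_\chi^{\otimes na}$ at $\infty$ and $\det(\FF_{(0)})\otimes\LL_\chi^{\otimes n}$ at $0$.

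Now I would form the auxiliary rank-one smooth sheaf
$$
\MM:=\widehat{\det(\FF)}\otimes\LL_{\bar\eta}^{\otimes n}\otimes\LL_{\bar\chi}^{\otimes na}\otimes\det(\FF\ast\LL_\chi)
$$
on $\Gmk$. A direct computation shows that its monodromy at $\infty$ is trivial, so $\MM$ extends to a smooth sheaf on $\PP^1_{\bar k}\setminus\{0\}\cong\AAA^1_{\bar k}$; and at $0$ it is tame (all factors are tame there). Since the tame fundamental group of $\AAA^1_{\bar k}$ vanishes, $\MM$ must be geometrically trivial. Comparing the monodromy of $\MM$ at $0$ to the trivial character then yields
$$
\LL_{\bar\eta}^{\otimes n}\otimes\LL_{\bar\chi}^{\otimes na}\otimes\LL_\chi^{\otimes n}\cong\mathbf{1},
$$
i.e.\ $\LL_\eta^{\otimes n}\cong\LL_{\bar\chi}^{\otimes n(a-1)}$, which is the desired isomorphism.

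There is no real obstacle here — the proof is formally parallel to Proposition \ref{irred2} with the roles of $0$ and $\infty$ swapped. The only delicate point is bookkeeping: the shift from $(a+1)$ to $(a-1)$ and from $\LL_\chi$ to $\LL_{\bar\chi}$ has to be tracked through the local Fourier transform $\FT^\psi_{(\infty,\infty)}$, specifically through the fact that the wild part at $\infty$ of the inverse Fourier transform picks up the tame factors $\LL_\chi^{\oplus na}$ (not $\LL_{\bar\chi}^{\oplus na}$) while the quotient at $0$ is $\LL_\chi\otimes\FF_{(0)}$; these are exactly the outputs recorded in Lemma \ref{sameswan2}, which makes the final sign bookkeeping yield $n(a-1)$ rather than $n(a+1)$.
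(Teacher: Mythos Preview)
Your proof is correct and follows essentially the same route as the paper's own argument: extend $\FF$ to $\Gmk$, read off the monodromies of $\FF\ast\LL_\chi$ at $0$ and $\infty$ from Lemma \ref{sameswan2}, pass to determinants, and trivialize an auxiliary rank-one sheaf using the vanishing of the tame fundamental group of $\AAA^1_{\bar k}$. The paper's write-up is terser (it just says ``as in Proposition \ref{irred2}'' and writes $\det(\FF)$ rather than your $\det(\FF_{(\infty)})$, which are of course the same since $\FF\in\RR_\infty$), but the content is identical.
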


\begin{proof}
 Extend $\FF$ to a smooth $\ell$-adic sheaf on $\Gmk$, tamely ramified at $0$, also denoted by $\FF$, and let $\FF$ also denote its extension by zero to $\AAA^1_{\bar k}$. By the proof of lemma \ref{sameswan2}, the sheaf $\FF\ast\LL_\chi$ is smooth on $\Gmk$, its monodromy at infinity is the direct sum of $\rho'_\chi(\FF)\cong\FF\otimes\LL_\eta$ and $na$ copies of $\LL_\chi$, and its monodromy at 0 has trivial part of dimension $na$ with quotient isomorphic to $\LL_\chi\otimes\FF_{(0)}$. Its determinant is then a smooth sheaf of rank $1$ on $\Gmk$, whose monodromy at $\infty$ is $\det(\FF)\otimes\LL_\eta^{\otimes n}\otimes\LL_\chi^{\otimes na}$, and whose monodromy at $0$ is $\det(\FF_{(0)})\otimes\LL_\chi^{\otimes n}$.

We conclude, as in proposition \ref{irred2}, that
$$
\det(\FF\ast\LL_\chi)\cong\det(\FF)\otimes\LL_{\eta}^{\otimes n}\otimes\LL_\chi^{\otimes na}
$$
as sheaves on $\Gmk$. Comparing their monodromies at $0$ gives the desired isomorphism.
\end{proof}

The remainder of the proof of theorem \ref{main2} is identical to the one for $\rho_\chi$.

\bigskip

We have a third variant, for representations $\FF\in\RR_\infty$ with slopes $<1$:
$$
\rho_\chi''(\FF):=\FT_{(\infty,0)}^{\psi,-1}(\LL_{\bar\chi}\otimes\FT^\psi_{(\infty,0)}(\FF)),
$$
which is again an auto-equivalence of the category of continuous $\ell$-adic representations of $\RR_\infty$ with slopes $<1$. As in the $\rho_\chi$ case we have $\rho_\chi''(\FF)\cong\FF\otimes\LL_\chi$ for $\FF$ tame. The corresponding formula for wild $\FF$ is
\begin{thm}
 Let $\FF\in\RR_\infty$ be totally wild with a single slope $a<1$. Write $a=c/d$, where $c$ and $d$ are relatively prime positive integers. Let $\LL_\eta$ be any tame character of $I_\infty$ such that $\LL_\eta^{\otimes d}=\LL_{\chi}^{\otimes(d-c)}$. Then
$$
\rho''_\chi(\FF)\cong\FF\otimes\LL_\eta.
$$
\end{thm}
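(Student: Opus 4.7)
The plan is to mimic the proofs of Theorems \ref{main} and \ref{main2} essentially verbatim, since the only thing that changes is which local Fourier transform is used. Concretely, I would carry out five steps in order: (i) establish the analog of Lemma \ref{sameswan2} for $\rho''_\chi$; (ii) invoke Lemma \ref{chartame} exactly as in Proposition \ref{irred1} to produce, for any irreducible $\FF\in\RR_\infty$ of slope $a<1$, a tame character $\LL_\zeta$ of $I_\infty$ with $\rho''_\chi(\FF)\cong\FF\otimes\LL_\zeta$; (iii) pin down $\LL_\zeta^{\otimes n}$ via a determinant calculation as in Proposition \ref{irred2}; (iv) kill the $n$-th root ambiguity with Lemma \ref{induced}; and (v) reduce from irreducible to indecomposable exactly as the proof of Theorem \ref{main} does.

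For the one genuinely new step (i), I would extend $\FF$ to a smooth $\ell$-adic sheaf on $\Gmk$, tamely ramified at $0$ with tame monodromy $\FF_{(0)}$ there. Since every slope at infinity is $<1$, no Artin--Schreier sheaf appears in $\FF$, so $\FT^\psi\FF$ is a single sheaf on $\AAA^1_{\bar k}$; Ogg--Shafarevic gives it generic rank $n$ on $\Gmk$. The principle of stationary phase, via Laumon's formulas for $\FT^\psi_{(0,\infty)}$ and $\FT^\psi_{(\infty,0)}$, identifies the monodromy of $\FT^\psi\FF$ at $\infty$ with $\widehat{\FF_{(0)}}$ (tame, rank $n$) and its monodromy at $0$ with an extension of $\FT^\psi_{(\infty,0)}\FF$ (rank $n(1-a)$, single slope $a/(1-a)$) by a trivial rank-$na$ part. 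Tensoring with $\LL_{\bar\chi}$ and applying the inverse Fourier transform then presents $\FF\ast\LL_\chi$ as a smooth sheaf on $\Gmk$ of rank $n(a+1)$ whose monodromy at infinity is the direct sum of $\rho''_\chi(\FF)$ and $na$ copies of $\LL_\chi$, and whose monodromy at $0$ is an extension of $\LL_\chi\otimes\FF_{(0)}$ by a trivial rank-$na$ part. The Swan identity $\Swan(\rho''_\chi(\FF)\otimes\GGG)=\Swan(\FF\otimes\GGG)$ then drops out of Lemma \ref{associative} and Euler--Poincar\'e exactly as in Lemma \ref{sameswan2}.

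For step (iii), the description above gives $\det(\FF\ast\LL_\chi)$ monodromy $\det(\FF)\otimes\LL_\zeta^{\otimes n}\otimes\LL_\chi^{\otimes na}$ at $\infty$ and $\det(\FF_{(0)})\otimes\LL_\chi^{\otimes n}$ at $0$; twisting by $\widehat{\det(\FF)}\otimes\LL_{\bar\zeta}^{\otimes n}\otimes\LL_{\bar\chi}^{\otimes na}$ produces a rank-one lisse sheaf on $\Gmk$ that is trivial at $\infty$ and tame at $0$, hence trivial by the vanishing of $\pi_1^{\mathrm{tame}}(\PP^1_{\bar k}\setminus\{0\})$. Comparing monodromies at $0$ then yields $\LL_\zeta^{\otimes n}\cong\LL_\chi^{\otimes n(1-a)}$, and integrality of $\Swan(\FF)=nc/d$ forces $d\mid n$, so Lemma \ref{induced} upgrades $\LL_\zeta$ to the desired $\LL_\eta$. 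The only delicate point throughout, just as in Theorems \ref{main} and \ref{main2}, is keeping careful track of the trivial rank-$na$ constituents produced by the stationary-phase decomposition; once these are identified correctly, the rest is mechanical.
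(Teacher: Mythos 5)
Your plan is correct, but it takes a genuinely different (and much longer) route than the paper. The paper disposes of this third variant in a few lines by a purely formal reduction: setting $\GGG:=\FT^\psi_{(\infty,0)}(\FF)\in\RR_0$, which has slope $a/(1-a)=c/(d-c)$, the desired isomorphism is equivalent to $\FT^\psi_{(\infty,0)}(\LL_\eta\otimes\FT^{\psi,-1}_{(\infty,0)}(\GGG))\cong\GGG\otimes\LL_{\bar\chi}$; since the inverse of $\FT^\psi_{(\infty,0)}$ is $\FT^{\bar\psi}_{(0,\infty)}$ and $\rho$ is independent of the additive character, the left-hand side is exactly $\rho_{\bar\eta}(\GGG)$, and Theorem \ref{main} (applied with $\bar\eta$ in place of $\chi$, slope $c/(d-c)$, and the hypothesis $\LL_{\bar\chi}^{\otimes(d-c)}=\LL_{\bar\eta}^{\otimes d}=\LL_{\bar\eta}^{\otimes(c+(d-c))}$) gives precisely $\rho_{\bar\eta}(\GGG)\cong\GGG\otimes\LL_{\bar\chi}$. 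In other words, $\rho''_\chi$ is conjugate to a Katz--Radon transform at $0$ via the local Fourier transform, so no new geometry is needed. Your approach instead re-runs the whole stationary-phase/Swan-conductor/determinant machinery of Lemmas \ref{sameswan}--\ref{induced} in the slope-$<1$ regime at infinity; I checked the bookkeeping (generic rank $n$ of $\FT^\psi\FF$, the rank-$na$ trivial constituents at $0$ and the $na$ copies of $\LL_\chi$ at $\infty$ of $\FF\ast\LL_\chi$, and the resulting $\LL_\zeta^{\otimes n}\cong\LL_\chi^{\otimes n(1-a)}$ with $d\mid n$) and it is consistent, so your argument would go through. What your version buys is a self-contained proof symmetric with Theorems \ref{main} and \ref{main2}, together with an explicit description of the local monodromies of $\FF\ast\LL_\chi$ in this slope range; what the paper's version buys is brevity and immunity from exactly the ``delicate point'' you flag, namely tracking the trivial rank-$na$ constituents, since it inherits all of that from the already-proved Theorem \ref{main}.
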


\begin{proof}
 Let $\GGG:=\FT^\psi_{(\infty,0)}(\FF)\in\RR_0$, which has slope $\frac{a}{1-a}=\frac{c}{d-c}$ \cite[Theorem 13]{katz1988travaux}. The statement is then equivalent to
$$
 \FT_{(\infty,0)}^{\psi,-1}(\LL_{\bar\chi}\otimes\GGG)\cong\LL_\eta\otimes\FT_{(\infty,0)}^{\psi,-1}(\GGG)
$$
or
$$
\FT^\psi_{(\infty,0)}(\LL_\eta\otimes\FT_{(\infty,0)}^{\psi,-1}(\GGG))\cong\GGG\otimes\LL_{\bar\chi}.
$$
But the left hand side is just $\rho_{\bar\eta}(\GGG)$, since the inverse of $\FT^\psi_{(\infty,0)}$ is $\FT^{\bar\psi}_{(0,\infty)}$ with respect to the conjugate additive character, and $\rho_\chi$ does not depend on the choice of the non-trivial additive character $\psi$. So the isomorphism follows from theorem \ref{main}.
\end{proof}

\section*{Funding}

This work was partially supported by P08-FQM-03894 (Junta de Andaluc\'{\i}a), MTM2010-19298 and FEDER


\bibliographystyle{plainnat}

\end{document}